\numberwithin{equation}{section}
\newtheorem{Theorem}{Theorem}[section]
\newtheorem{Proposition}[Theorem]{Proposition}
\newtheorem{cor}[Theorem]{Corollary}
\newtheorem{lemma}[Theorem]{Lemma}
\theoremstyle{remark}
\newtheorem{Definition}[Theorem]{Definition}
\newtheorem{Example}[Theorem]{Example}
\newtheorem{Remark}[Theorem]{Remark}
\begin{document}
\title{On approximately left $\phi$-biprojective Banach algebras}
\author{A. Sahami}
\address{Faculty of Mathematics and Computer Science,
Amirkabir University of Technology, 424 Hafez Avenue, 15914 Tehran,
Iran.} \email{amir.sahami@aut.ac.ir}


\begin{abstract}
In this paper, for a Banach algebra $A$, we introduced the new
notions of approximately left $\phi$-biprojective and approximately
left character biprojective, where $\phi$ is a non-zero
multiplicative linear functional on $A$. We show that for $SIN$
group $G$, Segal algebra $S(G)$ is approximately left
$\phi_{1}$-biprojective if and only if $G$ is amenable, where
$\phi_{1}$ is the augmentation character on $S(G)$. Also we showed
that the measure algebra $M(G)$ is approximately left character
biprojective if and only if $G$ is discrete and amenable. For a
Clifford semigroup $S$, we show that $\ell^{1}(S)$ is approximately
left character biprojective if and only if $\ell^{1}(S)$ is
pseudo-amenable. We study the hereditary property of these new
notions. Finally we give some examples among semigroup algebras and
Triangular Banach algebras to show the differences of these notions
and the classical ones.
\end{abstract}

\subjclass[2010]{Primary 46M10 Secondary,  43A07, 43A20.}

\keywords{Approximate left $\phi$-biprojectivity, Left
$\phi$-amenability, Segal algebra, Semigroup algebra, Measure
algebra.}

\maketitle

\section{Introduction and Preliminaries}
The class  of amenable Banach algebras has been introduced by
Johnson. A Banach algebra $A$ is called amenable if for every
continuous derivation $D:A\rightarrow X^{*}$  there exists $x_{0}\in
X^{*}$ such that
$$D(a)=a\cdot x_{0}-x_{0}\cdot a\quad(a\in A).$$ He also showed that  $A$ is amenable if and only if
there exists a bounded net $(m_{\alpha})$ in $A\otimes_{p}A$ such
that $$a\cdot m_{\alpha}-m_{\alpha}\cdot a\rightarrow 0,\quad
\pi_{A}(m_{\alpha})a\rightarrow a\qquad(a\in A),$$ where
$\pi_{A}:A\otimes_{p}A\rightarrow A$ is given by $\pi_{A}(a\otimes
b)=ab$ for every $a,b\in A$, see \cite{Joh}. There is another
approach to study Banach algebra through the homological theory. Two
important notions of biflatness and biprojectivity for Banach
algebras have key role in homological theory. In fact a Banach
algebra $A$ is called biflat (biprojective), if there exists a
bounded $A$-bimodule morphism $\rho:A\rightarrow
(A\otimes_{p}A)^{**}$ ($\rho:A\rightarrow A\otimes_{p}A$) such that
$\pi_{A}^{**}\circ\rho$ is the canonical embedding of $A$ into
$A^{**}$ ($\rho$ is a right inverse for $\pi_{A}$), respectively see
\cite{hel}.  Note that a Banach algebra $A$ is amenable if and only
if $A$ is biflat and $A$ has a bounded approximate identity. In fact
the dual notion for amenability in Banach homology is biflatness. It
is known that for a locally compact group $G$, $L^{1}(G)$ is biflat
(biprojective) if and only if $G$ is amenable(compact),
respectively.

Recently a notion of amenability  related to a character has been
introduced in \cite{kan}. Indeed a Banach algebra $A$ is called left
$\phi$-amenable, if there exists a bounded net $(a_{\alpha})$ in $A$
such that $aa_{\alpha}-\phi(a)a_{\alpha}\rightarrow 0$ and
$\phi(a_{\alpha})\rightarrow 1$ for all $a\in A,$ where $\phi
\in\Delta(A).$ For a locally compact group $G$, the Fourier algebra
$A(G)$ is always left $\phi$-amenable. Also the group algebra
$L^{1}(G)$ is left $\phi$-amenable if and only if $G$ is amenable.
For the further information about this notion see \cite{san} and
\cite{alagh}. Motivated by these consideration, author with A.
Pourabbas introduced  notions of homological algebra theory related
to a character. A Banach algebra $A$ is called
$\phi$-biflat($\phi$-biprojective) if there exists a bounded
$A$-bimodule morphism  $$\rho:A\rightarrow
(A\otimes_{p}A)^{**}(\rho:A\rightarrow A\otimes_{p}A)$$ such that
$$\tilde{\phi}\circ\pi^{**}_{A}\circ\rho(a)=\phi(a)(\phi\circ\pi_{A}\circ\rho(a)=\phi(a))\quad
(a\in A),$$ respectively, where $\tilde{\phi}(F)=F(\phi)$ for all
$F\in A^{**}$. For a locally compact group $G$, we showed that Segal
algebra $S(G)$ is $\phi-$biflat(biprojective) if and only if $G$ is
amenable(compact). Also $A(G)$ is $\phi$-biprojective if and only if
$G$ is discrete, see \cite{sah rom} and \cite{sah1}.

Recently approximate versions of amenability and homological
properties of Banach algebras have been under more observations. In
\cite{zhang} Zhang introduced the notion of approximately
biprojective Banach algebras, that is, $A$ is approximately
biprojective if there exists a net of $A$-bimodule morphism
$\rho_{\alpha}:A\rightarrow A\otimes_{p}A$ such that
$$\pi_{A}\circ\rho_{\alpha}(a)\rightarrow a\quad(a\in A).$$ Author
with A. Pourabbas investigated approximate biprojectivity of some
semigroup algebras and some related Triangular Banach algebras see
\cite{sah col} and \cite{sah3}.
Approximate amenable Banach algebras
have been introduced by Ghahramani and Loy. Indeed a Banach algebra
$A$ is approximate amenable if for every continuous derivation
$D:A\rightarrow X^{*}$, there exists a net $(x_{\alpha})$ in $X^{*}$
such that
$$D(a)=\lim_{\alpha}a\cdot x_{\alpha}-x_{\alpha}\cdot a \quad(a\in
A).$$ Other extensions of amenability are pseudo-amenability and
pseudo-contractibility. A Banach algebra $A$ is pseudo-amenable
(pseudo-contractible) if there exists a not necessarily bounded net
$(m_{\alpha})$ in $A\otimes_{p}A$ such that
$$a\cdot m_{\alpha}-m_{\alpha}\cdot a\rightarrow 0,\quad(a\cdot m_{\alpha}=m_{\alpha}\cdot a),\qquad \pi_{A}(m_{\alpha})a\rightarrow a\quad (a\in A),$$
respectively. For more information about these concepts the reader
referred to \cite{ghah pse}, \cite{gen 1} and \cite{gen 2}.
Motivated by these considerations, in \cite{agha}  the approximate
notions of amenability have been introduced and studied. A Banach
algebra $A$ is called approximately left $\phi$-amenable if there
exists a (not necessarily bounded) net $(a_{\alpha})$ in $A$ such
that $aa_{\alpha}-\phi(a)a_{\alpha}\rightarrow 0$ and
$\phi(a_{\alpha})\rightarrow 1$ for all $a\in A.$ Also $A$ is
approximately character amenable, if $A$ is approximately left
$\phi$-amenable for all $\phi\in\Delta(A)\cup\{0\}.$ They showed
that ${L^{1}(G)}^{**}$ is character amenable if and only if $G$ is
discrete and amenable. Also they showed that $M(G)$  is character
amenable if and only if $G$ is discrete and amenable.

In this paper, we extend the notions of $\phi-$biflatness and
$\phi-$biprojectivity. We  give an approximate notion of homological
algebra related to approximate left $\phi$-amenability. Here the
definition of our new notion:
\begin{Definition}\label{def}
Let $A$ be a Banach algebra and $\phi\in\Delta(A)$. $A$ is called
approximately left $\phi$-biprojective if there exists a net of
bounded linear maps from $A$ into $A\otimes_{p}A$, say
$(\rho_{\alpha})_{\alpha\in I}$, such that
\begin{enumerate}
\item [(i)] $a\cdot \rho_{\alpha}(x)-\rho_{\alpha}(ax)\xrightarrow{||\cdot||} 0$;
\item [(ii)] $\rho_{\alpha}(xa)-\phi(a)\rho_{\alpha}(x)\xrightarrow{||\cdot||} 0$;
\item [(iii)] $\phi\circ\pi_{A}\circ\rho_{\alpha}(x)-\phi(x)\rightarrow
0$,
\end{enumerate}
for every $a,x\in A$. We say that $A$ is approximately left
character biprojective if $A$ is approximately left
$\phi$-biprojective for all $\phi\in\Delta(A)$.
\end{Definition}
We show that approximate left $\phi$-amenability gives approximate
left $\phi$-biprojectivity. We study the hereditary properties of
this new notion. We show that
 for $SIN$ group $G$, Segal
algebra $S(G)$ is approximately left $\phi_{1}$-biprojective if and
only if $G$ is amenable, where $\phi_{1}$ is augmentation character
on $S(G)$. Also we showed that the measure algebra $M(G)$ is
approximately left character  biprojective if and only if $G$ is
discrete and amenable. We give some Banach algebra among Triangular
Banach algebras  which is never approximately left
$\phi$-biprojective. Also we give some examples which reveal the
differences our new notions and the classical ones.

We remark some standard notations and definitions that we shall need
in this paper. Let $A$ be a Banach algebra. Throughout this paper
the character space of $A$ is denoted by $\Delta(A)$, that is, all
non-zero multiplicative linear functionals on $A$.  Let $A$  be a
Banach algebra.  The projective tensor product
 $A\otimes_{p}A$ is a Banach $A$-bimodule via the following actions
$$a\cdot(b\otimes c)=ab\otimes c,~~~(b\otimes c)\cdot a=b\otimes
ca\hspace{.5cm}(a, b, c\in A).$$ Let $A$ and $B$ be Banach algebras
and $\phi\in\Delta(A)$ and $\psi\in\Delta(B)$. We denote
$\phi\otimes \psi$ for a map defined by $\phi\otimes \psi(a\otimes
b)=\phi(a)\psi(b)$ for all $a\in A$ and $b\in B.$ It is easy to see
that $\phi\otimes \psi\in\Delta(A\otimes_{p}B).$

Let $A$ and $B$ be a Banach algebras and let $X$ be a Banach
$A,B$-module, that is, $X$ is a Banach space, a left $A$-module and
a right $B$-module with the compatible module action that satisfies
$(a\cdot x)\cdot b=a\cdot (x\cdot b)$ and $||a\cdot x\cdot b||\leq
||a||||x||||b||$ for every $a\in A, x\in X, b\in B$.  With the usual
matrix operation and $||\left(\begin{array}{cc} a&x\\
0&b\\
\end{array}
\right)||=||a||+||x||+||b||$,
$T=\left(\begin{array}{cc} A&X\\
0&B\\
\end{array}
\right)$ becomes a Banach algebra which is called Triangular Banach
algebra.
Let $\phi\in\Delta(B)$. We define a character  $\psi_{\phi}\in\Delta(T)$  via  $\psi_{\phi}\left(\begin{array}{cc} a&x\\
0&b\\
\end{array}
\right)=\phi(b)$ for every $a\in A$, $b\in B$ and $x\in X$.

For a locally compact group $G$, $M(G)$ is denoted for measure
algebra and $A(G)$ is denoted for Fourier algebra.
\section{Approximate left $\phi$-biprojectivity}
In this section we study the general properties of approximately
left $\phi$-biprojective Banach algebras.
\begin{Proposition}\label{inner cont}
Let $A$ be a  Banach algebra with $\phi\in\Delta(A)$. Suppose that
$A$ is an approximately left $\phi$-biprojective Banach algebra
which has an element $a_{0}$ such that $aa_{0}=a_{0}a$ and
$\phi(a_{0})=1$. Then $A$ is approximately left $\phi$-amenable.
\end{Proposition}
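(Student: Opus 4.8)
The plan is to construct the net witnessing approximate left $\phi$-amenability directly out of the biprojective net, using the central element $a_{0}$ as a fixed test point. Concretely, I would set $a_{\alpha}:=\pi_{A}\circ\rho_{\alpha}(a_{0})\in A$ for each index $\alpha$ and claim that $(a_{\alpha})$ is the required net. This net need not be bounded, but that is harmless since the definition of approximate left $\phi$-amenability permits an unbounded net.

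First I would dispose of the normalization condition $\phi(a_{\alpha})\rightarrow 1$. This is immediate from condition (iii) in Definition \ref{def} evaluated at $x=a_{0}$: indeed $\phi(a_{\alpha})=\phi\circ\pi_{A}\circ\rho_{\alpha}(a_{0})\rightarrow\phi(a_{0})=1$.

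The substantive step is to verify that $aa_{\alpha}-\phi(a)a_{\alpha}\rightarrow 0$ for every $a\in A$. Here I would exploit that $\pi_{A}$ is a bounded left $A$-module morphism, so that $aa_{\alpha}=\pi_{A}(a\cdot\rho_{\alpha}(a_{0}))$, and then glue the two one-sided estimates. By (i), $a\cdot\rho_{\alpha}(a_{0})-\rho_{\alpha}(aa_{0})\rightarrow 0$, and applying the bounded map $\pi_{A}$ preserves this. Since $a_{0}$ is central we may rewrite $\rho_{\alpha}(aa_{0})=\rho_{\alpha}(a_{0}a)$, and by (ii), $\rho_{\alpha}(a_{0}a)-\phi(a)\rho_{\alpha}(a_{0})\rightarrow 0$; applying $\pi_{A}$ once more turns this into $\pi_{A}(\rho_{\alpha}(a_{0}a))-\phi(a)a_{\alpha}\rightarrow 0$. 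Writing the difference as the telescoping sum
$$aa_{\alpha}-\phi(a)a_{\alpha}=\big[\pi_{A}(a\cdot\rho_{\alpha}(a_{0}))-\pi_{A}(\rho_{\alpha}(a_{0}a))\big]+\big[\pi_{A}(\rho_{\alpha}(a_{0}a))-\phi(a)\pi_{A}(\rho_{\alpha}(a_{0}))\big],$$
both bracketed families tend to $0$ by the two preceding estimates, which yields the claim and so establishes that $A$ is approximately left $\phi$-amenable.

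I expect no serious obstacle: once the candidate $a_{\alpha}=\pi_{A}\circ\rho_{\alpha}(a_{0})$ is chosen, the argument is essentially bookkeeping with the continuity of $\pi_{A}$. The only point that genuinely uses a hypothesis is the centrality $aa_{0}=a_{0}a$, which is precisely what allows me to pass from the left-sided estimate (i), naturally attached to $aa_{0}$, to the $\phi$-twisted right-sided estimate (ii), naturally attached to $a_{0}a$; without this identification the two families of estimates could not be joined. I would also remark that only the boundedness of $\pi_{A}$ is needed for the convergences, not any uniform bound on the $\rho_{\alpha}$, although the latter is part of the standing hypothesis in Definition \ref{def}.
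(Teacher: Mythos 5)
Your proof is correct and follows essentially the same route as the paper: both evaluate the net at the central element, $n_{\alpha}=\rho_{\alpha}(a_{0})$, use the identical telescoping decomposition through $\rho_{\alpha}(aa_{0})=\rho_{\alpha}(a_{0}a)$ (conditions (i), centrality, (ii)), and get the normalization from (iii). The only difference is cosmetic: the paper pushes $n_{\alpha}$ down to $A$ via the slice map $T(a\otimes b)=\phi(b)a$, whereas you use $\pi_{A}$ itself; since both are bounded left $A$-module maps satisfying $\phi\circ T=\phi\circ\pi_{A}$, the two arguments coincide, with yours being marginally leaner as it avoids introducing the auxiliary map.
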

\begin{proof}
Let $(\rho_{\alpha})_{\alpha\in I}$ be as in  Definition \ref{def}.
Let $a_{0}$ be an element in $A$ such that $aa_{0}=a_{0}a$ and
$\phi(a_{0})=1$. Set $n_{\alpha}=\rho_{\alpha}(a_{0})$. It is clear
that $(n_{\alpha})$ is a net in $A\otimes_{p}A$ such that
\begin{equation*}
\begin{split}
a\cdot n_{\alpha}-\phi(a)n_{\alpha}&=a\cdot
\rho_{\alpha}(a_{0})-\phi(a)\rho_{\alpha}(a_{0})\\
&=a\cdot
\rho_{\alpha}(a_{0})-\rho_{\alpha}(aa_{0})+\rho_{\alpha}(aa_{0})-\rho_{\alpha}(a_{0}a)+\rho_{\alpha}(a_{0}a)-\phi(a)\rho_{\alpha}(a_{0})\\
&\rightarrow 0
\end{split}
\end{equation*}
for every $a\in A. $ Also we have
$$\phi\circ\pi_{A}(n_{\alpha})-1=\phi\circ\pi_{A}\circ\rho_{\alpha}(a_{0})-\phi(a_{0})\rightarrow0.$$
Define $T:A\otimes_{p}A\rightarrow A$ by $T(a\otimes b)=\phi(b)T(a)$
for each $a\in A$ and $b\in B.$ It is clear that $T$ is a bounded
linear map which satisfies $$T(a\cdot x)=aT(x),\quad T(x\cdot
a)=\phi(T(x)),\quad \phi\circ T=\phi\circ\pi_{A},\quad (a\in A,x\in
A\otimes_{p}A).$$ Set $m_{\alpha}=T(n_{\alpha})$. One can show that
$$aT(n_{\alpha})-\phi(a)T(n_{\alpha})=T(a\cdot n_{\alpha}-\phi(a)n_{\alpha})\rightarrow 0,\quad (a\in A)$$
and
$$\phi(m_{\alpha})=\phi\circ T(n_{\alpha})=\phi\circ\pi_{A}(n_{\alpha})\rightarrow 1.$$
Thus  $A$ is approximately left $\phi$-amenable.
\end{proof}
\begin{Proposition}\label{app biproj}
Let $A$ be a Banach algebra with $\phi\in\Delta(A)$. If $A$ is
approximately biprojective, then $A$ is approximately left
$\phi$-biprojective.
\end{Proposition}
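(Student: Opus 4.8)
The plan is to start from the exact bimodule morphisms furnished by approximate biprojectivity and to \emph{twist away} the genuine right module action into the $\phi$-scaled one demanded by Definition \ref{def}. By hypothesis there is a net of $A$-bimodule morphisms $\rho_{\alpha}:A\rightarrow A\otimes_{p}A$ with $\pi_{A}\circ\rho_{\alpha}(x)\rightarrow x$ for every $x\in A$; in particular $\rho_{\alpha}(ax)=a\cdot\rho_{\alpha}(x)$ and $\rho_{\alpha}(xa)=\rho_{\alpha}(x)\cdot a$ hold \emph{exactly}. The only discrepancy with Definition \ref{def} is condition (ii): the honest right action $(b\otimes c)\cdot a=b\otimes ca$ must be replaced by multiplication by the scalar $\phi(a)$. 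Bridging precisely this gap is the crux of the argument.

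To this end I would introduce an auxiliary map. Since $\phi\neq 0$, fix $x_{0}\in A$ with $\phi(x_{0})=1$, and define $W:A\otimes_{p}A\rightarrow A\otimes_{p}A$ on elementary tensors by $W(b\otimes c)=\phi(c)\,b\otimes x_{0}$. This $W$ is a well-defined bounded linear map, since it factors as $A\otimes_{p}A\xrightarrow{\mathrm{id}\otimes\phi}A\xrightarrow{\,b\mapsto b\otimes x_{0}\,}A\otimes_{p}A$, a composition of two bounded linear maps. I would then set $\sigma_{\alpha}=W\circ\rho_{\alpha}$, a net of bounded linear maps from $A$ into $A\otimes_{p}A$, and claim that $(\sigma_{\alpha})$ witnesses approximate left $\phi$-biprojectivity.

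The verification is then routine. The map $W$ is a left $A$-module morphism, whence $\sigma_{\alpha}(ax)=W(a\cdot\rho_{\alpha}(x))=a\cdot\sigma_{\alpha}(x)$, giving (i) with zero error; and a one-line computation gives $W((b\otimes c)\cdot a)=\phi(a)W(b\otimes c)$, so that $\sigma_{\alpha}(xa)=W(\rho_{\alpha}(x)\cdot a)=\phi(a)\sigma_{\alpha}(x)$, giving (ii) also exactly. For (iii) one checks on elementary tensors that $\phi\circ\pi_{A}\circ W=\phi\circ\pi_{A}$ (both send $b\otimes c$ to $\phi(b)\phi(c)$, using $\phi(x_{0})=1$), and therefore $\phi\circ\pi_{A}\circ\sigma_{\alpha}(x)=\phi\bigl(\pi_{A}\circ\rho_{\alpha}(x)\bigr)\rightarrow\phi(x)$ by continuity of $\phi$ together with $\pi_{A}\circ\rho_{\alpha}(x)\rightarrow x$.

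I do not anticipate a genuine obstacle; the entire content lies in \emph{spotting} the twisting map $W$, which must simultaneously preserve the left action, linearise the right action through $\phi$, and leave $\phi\circ\pi_{A}$ unchanged. Once $W$ is in place, conditions (i) and (ii) hold with no error at all and only (iii) is an honest limit, inherited directly from the approximate identity property $\pi_{A}\circ\rho_{\alpha}\to\mathrm{id}$ of the given net.
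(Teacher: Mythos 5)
Your proof is correct, and in outline it follows the same strategy as the paper's own proof: compose the exact bimodule morphisms $\rho_{\alpha}$ furnished by approximate biprojectivity with a fixed bounded twisting map on $A\otimes_{p}A$ built from $\phi$ and a normalizing element. But the two twisting maps are not the same, and the difference is decisive. The paper fixes $a_{0}$ with $\phi(a_{0})=1$ and uses $T(a\otimes b)=\phi(a)a_{0}\otimes b$, which applies $\phi$ to the \emph{first} tensor factor; this $T$ satisfies
$$T(x\cdot m)=\phi(x)T(m),\qquad T(m\cdot x)=T(m)\cdot x\qquad (x\in A,\; m\in A\otimes_{p}A),$$
so $T\circ\rho_{\alpha}$ is an exact right $A$-module morphism that converts left multiplication into $\phi$-scaling --- the mirror images of conditions (i) and (ii) of Definition \ref{def}. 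Indeed, for the paper's map,
$$a\cdot (T\circ\rho_{\alpha})(x)-(T\circ\rho_{\alpha})(ax)=a\cdot T(\rho_{\alpha}(x))-\phi(a)T(\rho_{\alpha}(x)),$$
which has no reason to tend to zero (it would force $aa_{0}=\phi(a)a_{0}$), and condition (ii) fails similarly. Your map $W(b\otimes c)=\phi(c)\,b\otimes x_{0}$ twists the \emph{second} factor instead and satisfies $W(a\cdot m)=a\cdot W(m)$, $W(m\cdot a)=\phi(a)W(m)$ and $\phi\circ\pi_{A}\circ W=\phi\circ\pi_{A}$, which is precisely what (i), (ii) and (iii) demand; composing with $\rho_{\alpha}$ then gives (i) and (ii) exactly and (iii) in the limit, as you observe. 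So your argument is not merely an independent rediscovery of the paper's proof: it is its corrected form. The paper's printed map twists the wrong factor, and its verification records only the two identities that do hold for $T$, leaving unverified --- and in fact false in general for that $T$ --- the two conditions of Definition \ref{def} that the proposition actually requires.
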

\begin{proof}
Since $A$ is approximately biprojective, there exists a net of
$A$-bimodule morphism $\rho_{\alpha}:A\rightarrow A\otimes_{p}A$
such that
$$\pi_{A}\circ\rho_{\alpha}(a)\rightarrow a\quad(a\in A).$$ Pick $a_{0}\in
A$ such that $\phi(a_{0})=1.$ Set $T:A\otimes_{p}A\rightarrow
A\otimes_{p}A$ which defined by $T(a\otimes b)=\phi(a)a_{0}\otimes
b$ for each $a,b\in A.$ It is easy to see that
$$\phi(x)T(a\otimes b)=\phi(x)\phi(a)a_{0}\otimes b=\phi(xa)a_{0}\otimes b=T(x\cdot a\otimes b)$$
and
$$\phi\circ\pi_{A}\circ T(a\otimes b)=\phi(\phi(a)a_{0}b)=\phi(ab)=\phi\circ\pi_{A}(a\otimes b)$$
for each $a,b,x\in A.$ Using these facts one can see that
$(T\circ\rho_{\alpha})_{\alpha}$ satisfies the conditions in
Definition \ref{def}. So  $A$ is approximately left
$\phi$-biprojective.
\end{proof}
There exists a Banach algebra  which is never approximately left
$\phi$-biprojective.
\begin{Example}\label{ex1}
Consider a Triangular Banach algebra $T=\left(\begin{array}{cc} \mathbb{C}&\mathbb{C}\\
0&\mathbb{C}\\
\end{array}
\right).$ Define $\phi\in\Delta(T)$ by $\phi(\left(\begin{array}{cc} a&b\\
0&c\\
\end{array}
\right))=c $ for all $a,b,c\in\mathbb{C}.$ We claim that $T$ is not
approximately $\phi$-biprojective. To see this we go toward a
contradiction and assume that $T$ is approximately left
$\phi$-biprojective. Since $T$ is unital, by Proposition \ref{inner
cont} $T$ is
approximately left $\phi$-amenable. Set $I=\left(\begin{array}{cc} 0&\mathbb{C}\\
0&\mathbb{C}\\
\end{array}
\right).$ It is easy to see that $\phi|_{I}\neq 0$ then by
\cite[Proposition 5.1]{saha} $I$ is approximately left
$\phi$-amenable. Thus there exists  a net $(i_{\alpha})$ in $I$ such
that $$ii_{\alpha}-\phi(i)i_{\alpha}\rightarrow 0,\quad
\phi(i_{\alpha})\rightarrow 1,\quad(i\in I).$$ Hence there exist net
$(a_{\alpha})$ and $(b_{\alpha})$ in $\mathbb{C}$ such that
$i_{\alpha}=\left(\begin{array}{cc} 0&a_{\alpha}\\
0&b_{\alpha}\\
\end{array}
\right).$ So for each  $i=\left(\begin{array}{cc} 0&a\\
0&b\\
\end{array}
\right)$ in $I$, we have $$\left(\begin{array}{cc} 0&a\\
0&b\\
\end{array}
\right)\left(\begin{array}{cc} 0&a_{\alpha}\\
0&b_{\alpha}\\
\end{array}
\right)-b\left(\begin{array}{cc} 0&a_{\alpha}\\
0&b_{\alpha}\\
\end{array}
\right)\rightarrow 0.$$ Then $ab_{\alpha}-ba_{\alpha}\rightarrow 0,$
for each $a,b\in \mathbb{C}$. Since $b_{\alpha}\rightarrow 1,$
taking $a=1$ and $b=0$, gives a contradiction.
\end{Example}
We remind that by \cite[Proposition 2.7]{agha}, $A$ is approximately
left $\phi$-amenable if and only if there exists a net
$(m_{\alpha})$ in $(A\otimes_{p}A)^{**}$ such that $$a\cdot
m_{\alpha}-\phi(a)m_{\alpha}\rightarrow 0,\quad
\tilde{\phi}\circ\pi^{**}_{A}(m_{\alpha})\rightarrow 1,$$ for all
$a\in A.$
\begin{Proposition}\label{app left amen}
Let $A$ be a Banach algebra and $\phi\in\Delta(A).$ If $A$ is
approximately left $\phi$-amenable, then $A$ is approximately left
$\phi$-biprojective.
\end{Proposition}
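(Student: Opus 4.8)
The plan is to manufacture the required net $(\rho_\alpha)$ directly from the net witnessing approximate left $\phi$-amenability, placing that net in one slot of an elementary tensor and letting $\phi$ itself carry the variable $x$. Concretely, since $A$ is approximately left $\phi$-amenable there is a (not necessarily bounded) net $(a_\alpha)$ in $A$ with $aa_{\alpha}-\phi(a)a_{\alpha}\to 0$ and $\phi(a_{\alpha})\to 1$ for every $a\in A$. Because $\phi$ is non-zero, I would fix once and for all an element $b\in A$ with $\phi(b)=1$, and then define $\rho_\alpha\colon A\to A\otimes_{p}A$ by $\rho_\alpha(x)=\phi(x)\,(a_{\alpha}\otimes b)$. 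For each fixed $\alpha$ this is a bounded linear map (it is $\phi$ scaled by the fixed vector $a_{\alpha}\otimes b$), so $(\rho_\alpha)$ is a legitimate net of the type demanded in Definition \ref{def}; note that no uniform bound on the $\rho_\alpha$ is required, which is essential since $(a_\alpha)$ may be unbounded.

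Next I would verify conditions (ii) and (iii), which are essentially formal. For (ii), multiplicativity of $\phi$ gives $\rho_\alpha(xa)-\phi(a)\rho_\alpha(x)=\phi(xa)(a_{\alpha}\otimes b)-\phi(a)\phi(x)(a_{\alpha}\otimes b)=0$ identically, so this holds on the nose. For (iii), using $\pi_{A}(a_{\alpha}\otimes b)=a_{\alpha}b$ and $\phi(b)=1$ one obtains $\phi\circ\pi_{A}\circ\rho_\alpha(x)=\phi(x)\phi(a_{\alpha})\phi(b)=\phi(x)\phi(a_{\alpha})$, which tends to $\phi(x)$ precisely because $\phi(a_{\alpha})\to 1$.

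The only condition with genuine content is (i), and it is here that the shape of the construction matters. Using the left module action $a\cdot(a_{\alpha}\otimes b)=(aa_{\alpha})\otimes b$ together with $\phi(ax)=\phi(a)\phi(x)$, the difference collapses to
\[
a\cdot\rho_\alpha(x)-\rho_\alpha(ax)=\phi(x)\,\bigl(aa_{\alpha}-\phi(a)a_{\alpha}\bigr)\otimes b,
\]
whose norm is at most $\abs{\phi(x)}\,\norm{aa_{\alpha}-\phi(a)a_{\alpha}}\,\norm{b}$. Since $b$ is fixed this tends to $0$ for every $a,x\in A$, and the three conditions together show that $A$ is approximately left $\phi$-biprojective. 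The one point to watch — and the reason I keep the second tensor factor equal to the fixed $b$ rather than, say, taking $a_{\alpha}\otimes a_{\alpha}$ — is exactly this estimate: had $a_{\alpha}$ appeared in both slots, the bound in (i) would pick up an extra factor $\norm{a_{\alpha}}$, which need not stay controlled because the net $(a_{\alpha})$ is only assumed convergent in the relevant sense and not bounded. Anchoring one slot with a fixed $b$ removes that difficulty entirely, and this is the main (indeed essentially the only) obstacle in the argument.
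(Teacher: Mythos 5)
Your proof is correct, and it takes a genuinely different and more elementary route than the paper's. You work directly with the defining net $(a_{\alpha})$ in $A$ witnessing approximate left $\phi$-amenability and build the rank-one maps $\rho_{\alpha}(x)=\phi(x)\,(a_{\alpha}\otimes b)$ with $\phi(b)=1$; all three conditions of Definition \ref{def} then follow by inspection --- condition (ii) holds exactly, condition (iii) reduces to $\phi(a_{\alpha})\to 1$, and condition (i) reduces to $aa_{\alpha}-\phi(a)a_{\alpha}\to 0$ --- and your remark about anchoring the second tensor slot with a fixed $b$ (rather than using $a_{\alpha}\otimes a_{\alpha}$, which would cost an uncontrolled factor $\Vert a_{\alpha}\Vert$) is exactly the right precaution, since the net $(a_{\alpha})$ need not be bounded. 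The paper instead invokes the bidual characterization of approximate left $\phi$-amenability from \cite[Proposition 2.7]{agha}: it takes a net $(m_{\alpha})$ in $(A\otimes_{p}A)^{**}$, approximates each $m_{\alpha}$ in the weak-star topology by elements of $A\otimes_{p}A$, passes to convex combinations via Mazur's lemma to upgrade weak to norm convergence, and finally sets $\rho_{(F,\Lambda,\epsilon)}(a)=a\cdot n_{(F,\Lambda,\epsilon)}$ for the resulting elements of $A\otimes_{p}A$. What the paper's heavier machinery buys is reusability: its maps are exact left $A$-module morphisms (condition (i) holds with equality), and the same Mazur-lemma template is recycled in Theorem \ref{phi biflat} for $\phi$-biflatness and in Remark \ref{rem1} for pseudo-amenability and approximate amenability, settings where no character-based rank-one construction like yours is available. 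What your argument buys is brevity and transparency: it needs no bidual, no weak-star approximation, and no convexity argument, and it shows that for this particular implication the full strength of the paper's technique is unnecessary.
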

\begin{proof}
Let $A$ be approximately left $\phi$-amenable. Then there exists a
net $m_{\alpha}$ in $(A\otimes_{p}A)^{**}$ such that $a\cdot
m_{\alpha}-\phi(a)m_{\alpha}\rightarrow 0$ and
$\tilde{\phi}\circ\pi^{**}(m_{\alpha})=1,$ for each $a\in A,$ see
\cite[Proposition 2.7]{agha}. Take $\epsilon>0,$ $F\subseteq A$ and
$\Lambda\subseteq (A\otimes_{p}A)^{*}$ arbitrary finite subsets.
Then we have
$$||a\cdot
m_{\alpha}-\phi(a)m_{\alpha}||<\epsilon,\quad
|\tilde{\phi}\circ\pi_{A}^{**}(m_{\alpha})-1|<\epsilon,\quad (a\in
F).$$ It is well-known that for each $\alpha$, there exists a net
$(n^{\alpha}_{\beta})_{\beta}$ in $A\otimes_{p}A$ such that
$n^{\alpha}_{\beta}\xrightarrow{w^{*}}m_{\alpha}$. Since
$\pi^{**}_{A}$ is a $w^{*}$-continuous map, then
$$\pi_{A}(n^{\alpha}_{\beta})=\pi_{A}^{**}(n^{\alpha}_{\beta})\xrightarrow{w^{*}}\pi_{A}^{**}(m_{\alpha}).$$
Thus we have $$|a\cdot
n^{\alpha}_{\beta}(f)-am_{\alpha}(f)|<\frac{\epsilon}{K_{0}},
|\phi(a)n^{\alpha}_{\beta}(f)-\phi(a)m_{\alpha}(f)|<\frac{\epsilon}{K_{0}},
|\phi\circ\pi_{A}(n^{\alpha}_{\beta})-\tilde{\phi}\circ\pi^{**}(m_{\alpha})|<\epsilon,\quad
(a\in F, f\in\Lambda),$$ where $K_{0}=sup\{||f|||f\in \Lambda\}$.
Since $a\cdot m_{\alpha}-\phi(a)m_{\alpha}\rightarrow 0$ and
$\tilde{\phi}\circ\pi^{**}(m_{\alpha})=1,$ we can find
$\beta=\beta(F,\Lambda,\epsilon)$ such that
$$|a\cdot n^{\alpha}_{\beta}(f)-\phi(a)n^{\alpha}_{\beta}(f)|<c\frac{\epsilon}{K_{0}},\quad |\phi\circ\pi_{A}(n^{\alpha}_{\beta})-1|<\epsilon,\quad (a\in F,f\in \Lambda)$$
for some $c\in \mathbb{R}$. Using Mazur's lemma, we have a net
$(n_{(F,\Lambda,\epsilon)})$ in $A\otimes_{p}A$ such that $$||a\cdot
n_{(F,\Lambda,\epsilon)}-\phi(a)n_{(F,\Lambda,\epsilon)}||\rightarrow
0,\quad | \phi\circ\pi_{A}(n_{(F,\Lambda,\epsilon)})-1|\rightarrow
0,\quad (a\in F).$$ Define $\rho_{(F,\Lambda,\epsilon)}:A\rightarrow
A\otimes_{p}A$ by $\rho_{(F,\Lambda,\epsilon)}(a)=a\cdot
n_{(F,\Lambda,\epsilon)} $ for each $a\in A.$ It is clear that
$\rho_{(F,\Lambda,\epsilon)}(ab)=a\cdot
\rho_{(F,\Lambda,\epsilon)}(b)$ for each $a,b\in A.$
\begin{equation}
\begin{split}
||\rho_{(F,\Lambda,\epsilon)}(ab)-\phi(b)\rho_{(F,\Lambda,\epsilon)}(a)||&=||ab\cdot
n_{(F,\Lambda,\epsilon)}-\phi(b)a\cdot n_{(F,\Lambda,\epsilon)}||\\
&\leq ||a||||b\cdot n_{(F,\Lambda,\epsilon)}-\phi(b)
n_{(F,\Lambda,\epsilon)}||\rightarrow 0,
\end{split}
\end{equation}
for each $a,b\in A.$ Also
\begin{equation}
\begin{split}
|\phi\circ\pi_{A}\circ\rho_{(F,\Lambda,\epsilon)}(a)-\phi(a)|=|\phi\circ\pi_{A}(a\cdot
n_{(F,\Lambda,\epsilon)})-\phi(a)|&=|\phi(a)||\phi\circ\pi_{A}(n_{(F,\Lambda,\epsilon)})-1|\\
&\rightarrow 0,
\end{split}
\end{equation}
for each $a\in A-\ker\phi$. It is easy to see that
$\phi\circ\pi_{A}\circ\rho_{(F,\Lambda,\epsilon)}(a)=\phi(a)$ for
each $a\in\ker\phi$.
\end{proof}
\begin{Remark}\label{rem1}
Let $A$ be a Banach algebra and $\phi\in\Delta(A)$. Using the
argument of previous Theorem one can see that if $A$ is either
pseudo-amenable or approximately amenable, then $A$ is approximately
left $\phi$-biprojective.
\end{Remark}
\begin{Theorem}\label{phi biflat}
Let $A$ be  a Banach algebra and $\phi\in\Delta(A).$ If $A$ is
$\phi$-biflat, then $A$ is approximately left $\phi$-biprojective.
\end{Theorem}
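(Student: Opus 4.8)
The plan is to unfold the definition of $\phi$-biflatness and then convert the resulting bidual-valued bimodule morphism into the net required by Definition \ref{def}, following the weak$^{*}$-approximation scheme of Proposition \ref{app left amen}. Since $A$ is $\phi$-biflat, there is a bounded $A$-bimodule morphism $\rho\colon A\rightarrow(A\otimes_{p}A)^{**}$ with $\tilde{\phi}\circ\pi_{A}^{**}\circ\rho(a)=\phi(a)$ for all $a\in A$. As a bimodule morphism $\rho$ already satisfies $\rho(ax)=a\cdot\rho(x)$, which is condition (i) in exact form; the obstruction is that $\rho$ is right $A$-linear in the genuine sense, $\rho(xa)=\rho(x)\cdot a$, whereas (ii) asks for the $\phi$-twisted identity $\rho(xa)=\phi(a)\rho(x)$. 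So the first task is to replace the genuine right action by the $\phi$-twisted one.

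To do this, fix $a_{0}\in A$ with $\phi(a_{0})=1$ and define $T\colon A\otimes_{p}A\rightarrow A\otimes_{p}A$ by $T(a\otimes b)=a\otimes\phi(b)a_{0}$. A direct check gives $T(c\cdot m)=c\cdot T(m)$, $T(m\cdot c)=\phi(c)T(m)$ and $\phi\circ\pi_{A}\circ T=\phi\circ\pi_{A}$ for $c\in A$ and $m\in A\otimes_{p}A$. Passing to the second adjoint $T^{**}$, which is weak$^{*}$-continuous and commutes with the (adjoint) module actions, I would set $\Theta=T^{**}\circ\rho$. Then $\Theta\colon A\rightarrow(A\otimes_{p}A)^{**}$ is bounded linear and satisfies the three identities exactly: $\Theta(ax)=a\cdot\Theta(x)$, $\Theta(xa)=\phi(a)\Theta(x)$, and $\tilde{\phi}\circ\pi_{A}^{**}\circ\Theta(x)=\phi(x)$, the last because $\tilde{\phi}\circ\pi_{A}^{**}$ is evaluation at the functional $\phi\circ\pi_{A}$, which $T^{*}$ fixes.

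It then remains to descend from $(A\otimes_{p}A)^{**}$ to $A\otimes_{p}A$ approximately, exactly as in Proposition \ref{app left amen}. Indexing by triples $\nu=(F,\Lambda,\epsilon)$ with $F\subseteq A$ and $\Lambda\subseteq(A\otimes_{p}A)^{*}$ finite and $\epsilon>0$, I would use Goldstine's theorem on the finite-dimensional subspace $\span F$ to produce a bounded linear map $\rho_{\nu}\colon A\rightarrow A\otimes_{p}A$ whose values approximate those of $\Theta$ against $\Lambda$ well enough that $f(a\cdot\rho_{\nu}(x)-\rho_{\nu}(ax))$, $f(\rho_{\nu}(xa)-\phi(a)\rho_{\nu}(x))$ and $\phi\circ\pi_{A}\circ\rho_{\nu}(x)-\phi(x)$ are all less than $\epsilon$ in modulus for $a,x\in F$ and $f\in\Lambda$. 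Because these quantities lie in $A\otimes_{p}A$, their weak$^{*}$ smallness is weak smallness there, so along the net the three defects tend to $0$ weakly while condition (iii) tends to $0$ as a scalar.

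Finally, applying Mazur's lemma (passing to convex combinations) I would upgrade the weak convergence in (i) and (ii) to norm convergence, noting that convex combinations leave the affine quantity in (iii) convergent; this yields a net $(\rho_{\alpha})$ satisfying Definition \ref{def}, so $A$ is approximately left $\phi$-biprojective. The main obstacle is the descent step: constructing, for each $\nu$, a single bounded linear $\rho_{\nu}$ that simultaneously approximates $\Theta$ on all of $F$ and keeps all three defects small, and then ensuring the Mazur averaging does not spoil (iii). This is precisely the technical core already carried out in Proposition \ref{app left amen}, the only new ingredient here being the correcting map $T$ that turns the genuine bimodule morphism of $\phi$-biflatness into one with the $\phi$-twisted right action demanded by Definition \ref{def}.
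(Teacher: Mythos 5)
Your proof is correct, and it uses the same three ingredients as the paper's proof of Theorem \ref{phi biflat} --- the twisting map $T$, a weak$^{*}$ approximation of bidual-valued data by data in $A\otimes_{p}A$, and Mazur's lemma --- but it assembles them in the opposite order, which is a genuine structural difference. The paper descends first: it approximates the biflatness morphism $\rho$ by a net in $B(A,A\otimes_{p}A)$ in the weak$^{*}$ operator topology, notes that the resulting two-sided, untwisted defects vanish weakly, runs Mazur's lemma on a convex set of defect tuples to produce maps $\theta_{(F,G,\epsilon)}$ that are approximately genuine bimodule morphisms in norm, and only then composes with $T$ to convert the genuine right action into the $\phi$-twisted one required by Definition \ref{def}(ii). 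You twist first: composing with $T^{**}$ at the bidual level yields a single map $\Theta=T^{**}\circ\rho$ satisfying all three conditions of Definition \ref{def} exactly, with values in $(A\otimes_{p}A)^{**}$, and what remains is a pure descent problem, structurally the same as the one in Proposition \ref{app left amen}. Your ordering buys exact identities during the approximation stage and a net that satisfies Definition \ref{def} with no post-composition; the paper's ordering buys the convenience of quoting the known weak$^{*}$ density of $B(A,A\otimes_{p}A)$ in $B(A,(A\otimes_{p}A)^{**})$ instead of building approximants by hand. One point where your write-up is actually sounder than the paper's: your map $T(a\otimes b)=\phi(b)\,a\otimes a_{0}$ preserves the left action and $\phi$-twists the right action, which is exactly what Definition \ref{def} demands; the formula $T(a\otimes b)=\phi(a)a_{0}\otimes b$ printed in Proposition \ref{app biproj} twists the left action instead, so your orientation of $T$ is the one on which both that proposition and this theorem really depend.

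Two details of your descent step should be made explicit, though both are routine. First, conditions (i) and (ii) involve the values $\rho_{\nu}(ax)$ and $\rho_{\nu}(xa)$, so the Goldstine approximation must be carried out on the finite-dimensional subspace spanned by $F\cup\{ab: a,b\in F\}$, and to obtain a map defined on all of $A$ you should factor through a bounded projection onto that subspace, producing a finite-rank $\rho_{\nu}$. Second, to make $f\bigl(a\cdot\rho_{\nu}(x)-\rho_{\nu}(ax)\bigr)$ small for $f\in\Lambda$ you must approximate $\Theta$ not only against $\Lambda$ but also against the translated functionals $f\circ L_{a}$ for $a\in F$ (where $L_{a}$ denotes the left action of $a$ on $A\otimes_{p}A$) and against $\phi\circ\pi_{A}$; enlarging the finite test set accordingly costs nothing. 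With these insertions, and with the Mazur step phrased as the paper phrases it (zero lies in the weak, hence norm, closure of the convex set of defect tuples, the defects being affine in the map), your argument is complete and at the same level of rigor as the paper's own proof.
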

\begin{proof}
Since $A$ is $\phi$-biflat then there exists a bounded $A$-bimodule
morphism  $\rho:A\rightarrow (A\otimes_{p}A)^{**}$ such that
$\tilde{\phi}\circ\pi^{**}_{A}\circ \rho(a)=\phi(a)$ for each $ a\in
A.$ There exists a net $(\rho_{\alpha})$ in $B(A,A\otimes_{p}A)$
(the set of bounded linear maps from $A$ into $A\otimes_{p}A$) such
that $\rho_{\alpha}$ converges to $\rho$ in the $weak-star$
$operator$ $topology$. Since $\pi^{**}_{A}$ is a $w^{*}$-continuous
map, for each $a\in A$ we have
$$\pi_{A}\circ \rho_{\alpha}(a)=\pi^{**}_{A}\circ
\rho_{\alpha}(a)\xrightarrow{w^{*}}\pi_{A}^{**}\circ\rho(a),$$ so
$$\phi\circ \pi_{A}\circ \rho_{\alpha}(a)\rightarrow
\tilde{\phi}\circ\pi^{**}_{A}\circ \rho(a).$$ Let $\epsilon>0$ and
take $F=\{a_{1},a_{2},...,a_{r}\}$ and $G=\{x_{1},x_{2},...,x_{r}\}$
 arbitrary finite subsets of $A$. Define
\begin{equation}
\begin{split}
M=\{(a_{1}\cdot T(x_{1})-T(a_{1}x_{1}),a_{2}&\cdot T(x_{2})-T(a_{2}x_{2}),...,a_{r}\cdot T(x_{r})-T(a_{r}x_{r}),\\
&\phi\circ\pi_{A}\circ T(x_{i})-\phi(x_{i}))|T\in
B(A,A\otimes_{p}A)\}_{i=1,...,r}\subseteq
\prod^{r}_{i=1}(A\otimes_{p}A)\oplus_{1}\mathbb{C}.
\end{split}
\end{equation}
It is clear that $M$ is a convex set and $(0,0,...,0)$ belongs to
$\overline{M}^{w}$. Using Mazur's Lemma $(0,0,...,0)\in
\overline{M}^{w}=\overline{M}^{||\cdot||}$. Then we can find an
element  $\theta_{(F,G,\epsilon)}$ in $B(A,A\otimes_{p}A)$ such that
$$||a_{i}\cdot \theta_{(F,G,\epsilon)}(b_{i})-\theta_{(F,G,\epsilon)}(a_{i}b_{i})||<\epsilon,\quad ||\theta_{(F,G,\epsilon)}(a_{i}b_{i})-\theta_{(F,G,\epsilon)}(a_{i})\cdot b_{i}||<\epsilon$$
and
$$|\phi\circ\pi_{A}\circ\theta_{(F,G,\epsilon)}(a_{i})-\phi(a_{i})|<\epsilon,$$
for each $i\in\{1,2,...,r\}$. Hence the net
$(\theta_{(F,G,\epsilon)})_{(F,G,\epsilon)}$ satisfies
$$a\cdot \theta_{(F,G,\epsilon)}(b)-\theta_{(F,G,\epsilon)}(ab)\rightarrow 0,\quad \theta_{(F,G,\epsilon)}(ab)-\theta_{(F,G,\epsilon)}(a)\cdot b\rightarrow 0$$
and
$\phi\circ\pi_{A}\circ\theta_{(F,G,\epsilon)}(a)-\phi(a)\rightarrow
0$ for each $a,b \in A.$ Set $T$ the same map as in the proof of
Proposition \ref{app biproj}. It is easy to see that $(T\circ
\theta_{(F,G,\epsilon)})_{(F,G,\epsilon)}$ satisfies the conditions
in Definition \ref{def}. So $A$ is approximately left
$\phi$-biprojective.
\end{proof}
We have to remind that every biflat  Banach algebra $A$ with
$\phi\in\Delta(A)$ is $\phi$-biflat. Then Using previous Theorem, we
have the following corollary:
\begin{cor}
Suppose that $A$ is a biflat Banach algebra with $\phi\in\Delta(A)$.
Then $A$ is approximately left $\phi-$biprojective.
\end{cor}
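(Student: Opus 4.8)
The plan is to reduce the corollary to Theorem \ref{phi biflat} by establishing the intermediate fact flagged in the remark immediately preceding it: every biflat Banach algebra $A$ is $\phi$-biflat for each $\phi\in\Delta(A)$. Once this reduction is in place, the conclusion is immediate, since Theorem \ref{phi biflat} already asserts that $\phi$-biflatness implies approximate left $\phi$-biprojectivity. Thus the entire task is to verify that the bimodule morphism witnessing biflatness also witnesses $\phi$-biflatness, with no new map to construct.

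First I would recall that biflatness of $A$ furnishes a bounded $A$-bimodule morphism $\rho\colon A\rightarrow (A\otimes_{p}A)^{**}$ such that $\pi^{**}_{A}\circ\rho=\iota$, where $\iota\colon A\rightarrow A^{**}$ is the canonical embedding. The key point is that this same $\rho$ serves directly for $\phi$-biflatness. To see this, I would compose the identity $\pi^{**}_{A}\circ\rho=\iota$ with $\tilde{\phi}$ and evaluate at an arbitrary $a\in A$; using the definition $\tilde{\phi}(F)=F(\phi)$ for $F\in A^{**}$ together with $\iota(a)(\phi)=\phi(a)$, one obtains
$$\tilde{\phi}\circ\pi^{**}_{A}\circ\rho(a)=\tilde{\phi}(\iota(a))=\iota(a)(\phi)=\phi(a),$$
which is precisely the defining condition of $\phi$-biflatness. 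Hence $A$ is $\phi$-biflat, the bimodule morphism being inherited verbatim from the biflat structure.

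Having established that $A$ is $\phi$-biflat, I would then invoke Theorem \ref{phi biflat} to conclude that $A$ is approximately left $\phi$-biprojective, which completes the argument. There is essentially no genuine obstacle here: the whole content is the elementary compatibility of $\tilde{\phi}$ with the canonical embedding $\iota$, which is immediate from the definitions. The only point requiring a moment's care is keeping straight the identification of $\pi_{A}$ with the restriction of $\pi^{**}_{A}$ and the fact that evaluating the bidual element $\iota(a)$ at the character $\phi\in A^{*}$ returns $\phi(a)$; once this is noted, the display above does all the work.
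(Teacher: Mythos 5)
Your proof is correct and follows exactly the paper's route: the paper justifies this corollary by remarking that biflatness implies $\phi$-biflatness and then invoking Theorem \ref{phi biflat}, which is precisely your reduction. Your only addition is to spell out the elementary verification $\tilde{\phi}\circ\pi^{**}_{A}\circ\rho(a)=\iota(a)(\phi)=\phi(a)$ that the paper leaves implicit, and that verification is accurate.
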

\begin{Proposition}\label{ideal}
Let $A$ be a Banach algebra and $\phi\in\Delta(A)$. Suppose that $I$
is closed ideal of $A$ which $\phi|_{I}\neq 0.$ If $A$ is
approximately left $\phi$-biprojective, then $I$ is approximately
left $\phi$-biprojective.
\end{Proposition}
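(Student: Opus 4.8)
The plan is to transport the net $(\rho_{\alpha})$ witnessing approximate left $\phi$-biprojectivity of $A$ down to $I$ by postcomposing with a suitable bounded map $A\otimes_{p}A\to I\otimes_{p}I$. First, since $\phi|_{I}\neq 0$ I fix $e\in I$ with $\phi(e)=1$; note that $\psi:=\phi|_{I}\in\Delta(I)$, so it suffices to produce a net $I\to I\otimes_{p}I$ of bounded linear maps satisfying the three conditions of Definition \ref{def} relative to $\psi$. Let $(\rho_{\alpha})$ be as in Definition \ref{def} for $A$.

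The crux is the choice of the transfer map. Let $R_{e}:A\to A$, $R_{e}(u)=ue$, be right multiplication by $e$; since $I$ is an ideal, the range of $R_{e}$ lies in $I$. Define $P:=R_{e}\otimes R_{e}:A\otimes_{p}A\to I\otimes_{p}I$, that is $P(u\otimes v)=ue\otimes ve$. Then $P$ is bounded (with $\|P\|\le\|e\|^{2}$), lands in $I\otimes_{p}I$, and, crucially, it is a \emph{left} $A$-module morphism: because right and left multiplications commute, $P(a\cdot(u\otimes v))=(au)e\otimes ve=a\cdot(ue\otimes ve)=a\cdot P(u\otimes v)$. Moreover, using $\phi(e)=1$ and multiplicativity of $\phi$, one checks on elementary tensors that $\phi\circ\pi_{I}\circ P=\phi\circ\pi_{A}$. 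I then set $\sigma_{\alpha}:=P\circ(\rho_{\alpha}|_{I}):I\to I\otimes_{p}I$, which is a net of bounded linear maps.

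It remains to verify the three conditions, each of which reduces to the corresponding condition for $\rho_{\alpha}$. Condition (i) follows at once from $P$ being a left module morphism: for $a,x\in I$, $a\cdot\sigma_{\alpha}(x)-\sigma_{\alpha}(ax)=P\big(a\cdot\rho_{\alpha}(x)-\rho_{\alpha}(ax)\big)\to 0$. Condition (ii) follows from boundedness and linearity of $P$: $\sigma_{\alpha}(xa)-\psi(a)\sigma_{\alpha}(x)=P\big(\rho_{\alpha}(xa)-\phi(a)\rho_{\alpha}(x)\big)\to 0$. Condition (iii) follows from $\phi\circ\pi_{I}\circ P=\phi\circ\pi_{A}$, giving $\phi\circ\pi_{I}\circ\sigma_{\alpha}(x)=\phi\circ\pi_{A}\circ\rho_{\alpha}(x)\to\phi(x)$. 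I expect the only real obstacle to be the construction of $P$ itself: the symmetric sandwich $m\mapsto e\cdot m\cdot e$ also maps into $I\otimes_{p}I$ and handles conditions (ii) and (iii), but it fails condition (i), since left multiplication by $e$ on the first leg does not commute with the module action and leaves a defect governed by $ea-ae$. Replacing that left factor by right multiplication $R_{e}$ on \emph{both} legs repairs exactly this, because right multiplication does commute with the left action; this is the key point of the argument.
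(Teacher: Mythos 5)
Your proof is correct and takes essentially the same approach as the paper: fix $e\in I$ with $\phi(e)=1$, transfer the net via a bounded linear map $A\otimes_{p}A\to I\otimes_{p}I$ that is a left $A$-module morphism and satisfies $\phi\circ\pi_{I}\circ P=\phi\circ\pi_{A}$, then compose with $\rho_{\alpha}|_{I}$. The only (immaterial) difference is the choice of transfer map, $a\otimes b\mapsto ae\otimes be$ in your argument versus $a\otimes b\mapsto ai_{0}\otimes i_{0}b$ in the paper; the verification of the three conditions is identical.
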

\begin{proof}
Let $(\rho_{\alpha})_{\alpha}$ be a net of maps which satisfies
Definition \ref{def}. Take $i_{0}$ in $I$ such that $\phi(i_{0}
)=1.$ Define $T:A\otimes_{p}A\rightarrow I\otimes_{p}I$ by
$T(a\otimes b)=ai_{0}\otimes i_{0}b$ for each $a,b\in A.$ It is easy
to see that $T$ is a bounded linear map. Set
$\eta_{\alpha}=T\circ\rho_{\alpha}|_{I}:I\rightarrow I\otimes_{p}I.$
Then we have
$$i\cdot \eta_{\alpha}(j)-\eta_{\alpha}(ij)=T(i\cdot \rho_{\alpha}(j)-\rho_{\alpha}(ij))\rightarrow 0$$
and
$$\eta_{\alpha}(ij)-\phi(j)\eta_{\alpha}(i)=T(\rho_{\alpha}(ij)-\phi(j)\rho_{\alpha}(i))\rightarrow 0$$
also
$$\phi\circ\pi_{I}\circ\eta_{\alpha}(i)-\phi(i)=\phi\circ\pi_{I}\circ T\circ\rho_{\alpha}(i)-\phi(i)=\phi\circ\pi_{A}\circ\rho_{\alpha}(i)-\phi(i)\rightarrow 0,$$
for each $i,j\in I.$
\end{proof}
\begin{Theorem}
Let $A$ and $B$ be Banach algebras and $\phi\in\Delta(A)$ and
$\psi\in\Delta(B)$. Suppose that $A$ is unital and $B$ has an
idempotent  $x_{0}$  such that $\psi(x_{0})=1.$ If $A\otimes_{p}B$
is approximately left $\phi\otimes\psi$-biprojective, then $A$ is
approximately left $\phi$-biprojective.
\end{Theorem}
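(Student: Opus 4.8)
The plan is to push the net witnessing approximate left $\phi\otimes\psi$-biprojectivity of $A\otimes_{p}B$ down to $A$ by composing it on the right with an algebra embedding $A\hookrightarrow A\otimes_{p}B$ and on the left with a slice map that discards the $B$-coordinates. Let $(\rho_{\alpha})$ be a net of bounded linear maps $\rho_{\alpha}:A\otimes_{p}B\rightarrow (A\otimes_{p}B)\otimes_{p}(A\otimes_{p}B)$ satisfying the three conditions of Definition \ref{def} for the character $\phi\otimes\psi$. Since $x_{0}$ is an idempotent with $\psi(x_{0})=1$, the map $\iota:A\rightarrow A\otimes_{p}B$ given by $\iota(a)=a\otimes x_{0}$ is a bounded algebra homomorphism, because $\iota(a)\iota(a')=aa'\otimes x_{0}^{2}=aa'\otimes x_{0}=\iota(aa')$. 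With $e$ the identity of $A$, the element $e\otimes x_{0}$ is an idempotent of $A\otimes_{p}B$ with $(\phi\otimes\psi)(e\otimes x_{0})=1$, which is the natural anchor underlying this embedding.

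Next I would introduce the slice map. Let $S=\mathrm{id}_{A}\otimes\psi:A\otimes_{p}B\rightarrow A$ be the bounded linear map determined by $S(a\otimes b)=\psi(b)a$, and put $T=S\otimes S:(A\otimes_{p}B)\otimes_{p}(A\otimes_{p}B)\rightarrow A\otimes_{p}A$, so that $T((a_{1}\otimes b_{1})\otimes(a_{2}\otimes b_{2}))=\psi(b_{1})\psi(b_{2})\,a_{1}\otimes a_{2}$. Both $S$ and $T$ are bounded, since a character has norm at most one. I then define the candidate net by $\sigma_{\alpha}=T\circ\rho_{\alpha}\circ\iota:A\rightarrow A\otimes_{p}A$; each $\sigma_{\alpha}$ is a bounded linear map, being a composite of bounded linear maps.

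It remains to verify that the three conditions of Definition \ref{def} transfer, and for this I would record three identities for $T$ and $\phi\otimes\psi$. First, using $\psi(x_{0})=1$ one checks the module-intertwining relation $T((a\otimes x_{0})\cdot W)=a\cdot T(W)$ for every $W$; combining this with condition (i) for $\rho_{\alpha}$ applied to $\iota(a)$ and $\iota(x)$, and with $\iota(a)\iota(x)=\iota(ax)$, yields $a\cdot\sigma_{\alpha}(x)-\sigma_{\alpha}(ax)\rightarrow 0$. Second, since $(\phi\otimes\psi)(\iota(a))=\phi(a)\psi(x_{0})=\phi(a)$, condition (ii) for $\rho_{\alpha}$ (using the product $\iota(x)\iota(a)=\iota(xa)$) gives $\sigma_{\alpha}(xa)-\phi(a)\sigma_{\alpha}(x)\rightarrow 0$ after applying $T$. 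Third, a direct computation on elementary tensors shows $\phi\circ\pi_{A}\circ T=(\phi\otimes\psi)\circ\pi_{A\otimes_{p}B}$, so that, together with $(\phi\otimes\psi)(\iota(x))=\phi(x)$, condition (iii) for $\rho_{\alpha}$ becomes $\phi\circ\pi_{A}\circ\sigma_{\alpha}(x)-\phi(x)\rightarrow 0$.

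The main obstacle is essentially bookkeeping: one must handle the canonical reshuffling of $(A\otimes_{p}B)\otimes_{p}(A\otimes_{p}B)$ correctly and verify the intertwining identity $T((a\otimes x_{0})\cdot W)=a\cdot T(W)$, which is precisely where the hypotheses $x_{0}^{2}=x_{0}$ and $\psi(x_{0})=1$ are consumed. Once this identity and the two scalar identities for $\phi\otimes\psi$ are established, each of (i)--(iii) follows by applying the fixed bounded map $T$ to the corresponding norm-limit for $\rho_{\alpha}$, since a bounded operator preserves convergence of nets. This exhibits $(\sigma_{\alpha})$ as a witnessing net and shows $A$ is approximately left $\phi$-biprojective.
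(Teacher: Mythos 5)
Your proof is correct, and the witnessing net you construct is literally the paper's: the embedding $\iota(a)=a\otimes x_{0}$, the slice map $T((a\otimes b)\otimes(c\otimes d))=\psi(b)\psi(d)\,a\otimes c=\psi(bd)\,a\otimes c$ (the paper writes it in the latter form), and the composite $\sigma_{\alpha}=T\circ\rho_{\alpha}\circ\iota$. Where you differ is in how the convergence is verified, and your route is cleaner. The paper works upstairs in $(A\otimes_{p}B)\otimes_{p}(A\otimes_{p}B)$ first: it compares the action of the algebra element $a_{1}\otimes x_{0}$ with the $A$-module action of $a_{1}$ by a telescoping argument that passes through $e\otimes x_{0}$, where $e$ is the unit of $A$ --- this requires an extra application of condition (i) at $e\otimes x_{0}$ and is exactly where unitality of $A$ is consumed --- and only afterwards applies $T$. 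You instead apply $T$ immediately and absorb the discrepancy between the two actions into the single intertwining identity $T((a\otimes x_{0})\cdot W)=a\cdot T(W)$, which needs only $\psi(x_{0})=1$ and multiplicativity of $\psi$. Combined with $\iota(a)\iota(x)=\iota(ax)$ (idempotency of $x_{0}$), $(\phi\otimes\psi)\circ\iota=\phi$, and $\phi\circ\pi_{A}\circ T=(\phi\otimes\psi)\circ\pi_{A\otimes_{p}B}$, all three conditions of Definition \ref{def} follow by pushing the corresponding limits for $\rho_{\alpha}$ through the fixed bounded operator $T$. A noteworthy consequence: your argument never actually uses the unit of $A$ (your remark about $e\otimes x_{0}$ as an ``anchor'' is decorative and can be deleted), so you have in fact proved the theorem under the weaker hypothesis that $B$ has an idempotent $x_{0}$ with $\psi(x_{0})=1$ and no assumption on $A$, whereas the paper's proof as written genuinely needs $e$.
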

\begin{proof}
Let $(\rho_{\alpha}):A\otimes_{p}B\rightarrow
(A\otimes_{p}B)\otimes_{p}(A\otimes_{p}B)$ be a net of continuous
maps such that $$x\cdot\rho_{\alpha}(y)-\rho_{\alpha}(xy)\rightarrow
0,\quad\rho_{\alpha}(xy)-\phi(x)\rho_{\alpha}(y)\rightarrow 0$$ and
$$\phi\otimes\psi \circ
\pi_{A\otimes_{p}B}(x)-\phi\otimes\psi(x)\rightarrow 0,$$ for each
$x,y\in A\otimes_{p}B.$ Note that $A\otimes_{p}B$ with the following
actions becomes a Banach $A$-bimodule:$$a_{1}\cdot(a_{2}\otimes
b)=a_{1}a_{2}\otimes b,\quad (a_{2}\otimes b)\cdot
a_{1}=a_{2}a_{1}\otimes b,\quad (a_{1},a_{2}\in A,b\in B).$$
Consider
\begin{equation*}
\begin{split}
\rho_{\alpha}(a_{1}a_{2}\otimes
x_{0})-a_{1}\cdot\rho_{\alpha}(a_{2}\otimes x_{0})&=
\rho_{\alpha}((a_{1}\otimes x_{0})(a_{2}\otimes
x_{0}))-a_{1}\cdot\rho_{\alpha}(a_{2}\otimes x_{0})\\
&=\rho_{\alpha}((a_{1}\otimes x_{0})(a_{2}\otimes
x_{0}))-(a_{1}\otimes x_{0})\cdot \rho_{\alpha}(a_{2}\otimes
x_{0})+\\
&(a_{1}\otimes x_{0})\cdot \rho_{\alpha}(a_{2}\otimes
x_{0})-a_{1}\cdot\rho_{\alpha}(a_{2}\otimes x_{0})\\
&=\rho_{\alpha}((a_{1}\otimes x_{0})(a_{2}\otimes
x_{0}))-(a_{1}\otimes x_{0})\cdot \rho_{\alpha}(a_{2}\otimes
x_{0})+\\
&(a_{1}\cdot (e\otimes x_{0}))\cdot \rho_{\alpha}(a_{2}\otimes
x_{0})-a_{1}\cdot\rho_{\alpha}(a_{2}\otimes x_{0})\\
&=\rho_{\alpha}((a_{1}\otimes x_{0})(a_{2}\otimes
x_{0}))-(a_{1}\otimes x_{0})\cdot \rho_{\alpha}(a_{2}\otimes
x_{0})+\\
&(a_{1}\cdot (e\otimes x_{0}))\cdot \rho_{\alpha}(a_{2}\otimes
x_{0})-a_{1}\cdot\rho_{\alpha}(ea_{2}\otimes x_{0}x_{0})+\\
&a_{1}\cdot\rho_{\alpha}(ea_{2}\otimes
x_{0}x_{0})-a_{1}\cdot\rho_{\alpha}(a_{2}\otimes x_{0})\rightarrow 0
\end{split}
\end{equation*}
and
\begin{equation*}
\begin{split}
\rho_{\alpha}(a_{1}a_{2}\otimes
x_{0})-\phi(a_{2})\rho_{\alpha}(a_{1}\otimes
x_{0})&=\rho_{\alpha}((a_{1}\otimes x_{0})(a_{2}\otimes
x_{0}))-\phi(a_{2})\rho_{\alpha}(a_{1}\otimes x_{0})\\
&=\rho_{\alpha}((a_{1}\otimes x_{0})(a_{2}\otimes
x_{0}))-\phi\otimes\psi(a_{2}\otimes
x_{0})\rho_{\alpha}(a_{1}\otimes x_{0})+\\
&\phi\otimes\psi(a_{2}\otimes x_{0})\rho_{\alpha}(a_{1}\otimes
x_{0})-\phi(a_{2})\rho_{\alpha}(a_{1}\otimes x_{0})\rightarrow 0,
\end{split}
\end{equation*}
for each $a_{1},a_{2}\in A.$ Define
$T:(A\otimes_{p}B)\otimes_{p}(A\otimes_{p}B)\rightarrow
A\otimes_{p}A$ by $T((a\otimes b)\otimes (c\otimes
d))=\psi(bd)a\otimes c,$ for each $a,c\in A, b,d \in B.$ One can see
that $T$ is a bounded linear operator and $\pi_{A}\circ T=(id\otimes
\psi)\circ\pi_{A\otimes_{p}B},$ where $id\otimes \psi(a\otimes
b)=\psi(b)a$ for all $a\in A,b\in B.$ Set
$\eta_{\alpha}(a)=T\circ\rho(a\otimes x_{0})$. It is easy to see
that for each $\alpha$, $\eta_{\alpha}:A\rightarrow A\otimes_{p}A$
is a continuous map which satisfies $$a\cdot
\eta_{\alpha}(b)-\eta_{\alpha}(ab)\rightarrow 0,\quad
\eta_{\alpha}(ab)-\phi(b)\eta_{\alpha}(a)\rightarrow 0,\quad (a,b\in
A).$$ Also we have
$$\phi\circ\pi_{A}\circ\eta_{\alpha}(a)=\phi\circ\pi_{A}\circ T\circ\rho_{\alpha}(a\otimes x_{0})=\phi\circ (id\otimes \psi)\circ \pi_{A\otimes_{p}B}
\circ \rho_{\alpha}(a\otimes x_{0})\rightarrow \phi(a),$$ for each
$a\in A.$ Hence $A$ is approximately left $\phi$-biprojective.
\end{proof}
\section{Application to Banach algebras associated with a locally compact group}
Let $G$ be a locally compact group and let $\widehat{G}$ be its dual
group, which consists of all non-zero continuous homomorphism
$\zeta:G\rightarrow \mathbb{T}$. It is well-known that
$\Delta(L^{1}(G))=\{\phi_{\zeta}:\zeta \in \widehat{G}\}$, where
$\phi_{\zeta}(f)=\int_{G}\overline{\zeta(x)}f(x)dx$ and $dx$ is a
left Haar measure on $G$, for more details, see \cite[Theorem
23.7]{hew}.

The map $\phi_{1}:L^{1}(G)\rightarrow \mathbb{C}$ which is specified
by
$$\phi_{1}(f)=\int_{G} f(x)dx$$ is called augmentation character. We
know that augmentation character induce a character on $S(G)$ which
we denote it by $\phi_{1}$ again, see \cite{alagh}.

We recall that, for a locally compact group $G$, a linear subspace
$S(G)$ of  $L^{1}(G)$ is said to be a Segal algebra on $G$ if it
satisfies the following properties:
\begin{enumerate}
\item [(i)] $S(G)$ is a dense left ideal  in $L^{1}(G)$;
\item [(ii)]  $S(G)$ with respect to some norm $||\cdot||_{S(G)}$ is
a Banach space and $|| f||_{L^{1}(G)}\leq|| f||_{S(G)}$;
\item [(iii)] For $f\in S(G)$ and $y\in G$, $L_{y}f\in S(G)$
and the map $y\mapsto \delta_{y}\ast f$ is continuous. Also
$||\delta_{y}\ast f||_{S(G)}=|| f||_{S(G)}$, for $f\in S(G)$ and
$y\in G$.
\end{enumerate}
For more information about this algebras see \cite{rei}.
\begin{Theorem}\label{segal}
Let $G$ be a locally compact $SIN$-group. If $S(G)$ is approximately
$\phi_{1}$-biprojective, then $G$ is amenable.
\end{Theorem}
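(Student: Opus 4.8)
The plan is to connect approximate left $\phi_1$-biprojectivity of the Segal algebra to an already-known amenability criterion, using the special structure of $SIN$-groups. For a $SIN$-group the Segal algebra $S(G)$ possesses a central (or at least suitably symmetric) bounded approximate identity coming from the net of normalized characteristic functions of small invariant neighborhoods of the identity; this is the feature that lets me produce a genuine commuting element in the relevant sense. So first I would invoke Proposition \ref{inner cont}: I want to exhibit an element (or a net behaving like one) $a_0$ with $a a_0 = a_0 a$ and $\phi_1(a_0)=1$, which would immediately upgrade approximate left $\phi_1$-biprojectivity to approximate left $\phi_1$-amenability of $S(G)$.

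\emph{The main obstacle} is that $S(G)$ need not be unital, so there is no single element $a_0$ — I must work with the approximate identity and pass to a limit carefully. The $SIN$ hypothesis is exactly what rescues this: central approximate identities commute (in the limit) with every element of the algebra, so conditions (i) and (ii) of Definition \ref{def} can be combined at the level of the net $\rho_\alpha$ evaluated along this central net. Concretely, I would set $n_{\alpha,\beta} = \rho_\alpha(e_\beta)$ for a central bounded approximate identity $(e_\beta)$ normalized so that $\phi_1(e_\beta)=1$, and then run an iterated-limit / diagonal argument (in the style of the $\epsilon$-$F$ arguments already used in the proofs of Proposition \ref{app left amen} and Theorem \ref{phi biflat}) to extract a single net witnessing approximate left $\phi_1$-amenability.

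Once approximate left $\phi_1$-amenability of $S(G)$ is in hand, I would transfer it to $L^1(G)$. The augmentation character $\phi_1$ on $S(G)$ is the restriction of the augmentation character on $L^1(G)$, and $S(G)$ is a dense (left) ideal in $L^1(G)$ with $\|f\|_{L^1}\le\|f\|_{S(G)}$; using the density together with the norm inequality, the approximating net $(a_\alpha)\subseteq S(G)$ satisfying $f a_\alpha - \phi_1(f)a_\alpha\to 0$ and $\phi_1(a_\alpha)\to 1$ for $f\in S(G)$ should extend by continuity/approximation to all $f\in L^1(G)$. Hence $L^1(G)$ is approximately left $\phi_1$-amenable.

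The final step is to read off amenability of $G$ from approximate left $\phi_1$-amenability of $L^1(G)$. For the augmentation character $\phi_1$, left $\phi_1$-amenability of $L^1(G)$ is the classical amenability of $G$ (this is the Kaniuth–Lau–Pym characterization cited via \cite{kan}, \cite{alagh}), and the approximate version coincides with the bounded one here because the trivial representation is involved — any net $(a_\alpha)$ with $f a_\alpha-\phi_1(f)a_\alpha\to0$ and $\phi_1(a_\alpha)\to1$ furnishes an approximate invariant mean, forcing $G$ to be amenable. \textbf{I expect the delicate point} to be the passage through the nonunital approximate identity in the first step: making the two module conditions in Definition \ref{def} collapse to the single $\phi_1$-amenability condition requires the centrality afforded by the $SIN$ assumption, and getting the limits to interchange cleanly (rather than merely formally) is where the real work lies.
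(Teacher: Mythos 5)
Your overall skeleton (SIN $\Rightarrow$ central approximate identity $\Rightarrow$ Proposition \ref{inner cont} $\Rightarrow$ approximate left $\phi_1$-amenability of $S(G)$ $\Rightarrow$ amenability of $G$) matches the paper's, but the step where you transfer approximate left $\phi_1$-amenability from $S(G)$ to $L^1(G)$ ``by continuity/approximation'' has a genuine gap. The net $(a_\alpha)\subseteq S(G)$ furnished by approximate left $\phi_1$-amenability is \emph{not necessarily bounded} (this is precisely what distinguishes the approximate notion from left $\phi_1$-amenability), and the obvious estimate for $f\in L^1(G)$, $g\in S(G)$,
\begin{equation*}
\|fa_\alpha-\phi_1(f)a_\alpha\|_{L^1}\le\|(f-g)a_\alpha\|_{L^1}+\|ga_\alpha-\phi_1(g)a_\alpha\|_{L^1}+|\phi_1(g)-\phi_1(f)|\,\|a_\alpha\|_{L^1}
\end{equation*}
is useless without $\sup_\alpha\|a_\alpha\|_{L^1}<\infty$: the first and third terms cannot be made small uniformly in $\alpha$. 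So density of $S(G)$ in $L^1(G)$ does not let you conclude that $L^1(G)$ is approximately left $\phi_1$-amenable, and your appeal to the known characterization for $L^1(G)$ never gets off the ground.

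The paper avoids this entirely by never leaving $S(G)$ as the domain of the amenability property: it fixes $f\in S(G)$ with $\phi_1(f)=1$, sets $f_\alpha=fm_\alpha$, and exploits that translates $\delta_y\ast f$ again lie in $S(G)$ (Segal algebra axiom) with $\phi_1(\delta_y\ast f)=\phi_1(f)=1$; the convergence $\|am_\alpha-\phi_1(a)m_\alpha\|_{L^1}\to0$ applied only to the elements $a=\delta_y\ast f$ and $a=f$ of $S(G)$ yields $\|\delta_y f_\alpha-f_\alpha\|_{L^1}\to0$, and since $\phi_1(f_\alpha)\to1$ keeps $\|f_\alpha\|_{L^1}$ away from $0$, the normalized net $g_\alpha=|f_\alpha|/\|f_\alpha\|_{L^1}$ is a Reiter-type net, whence $G$ is amenable by \cite[Exercise 1.1.6]{run}. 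If you want to salvage your write-up, replace the density transfer by this direct Reiter argument. Separately, the ``main obstacle'' you flag in the first step is illusory: a \emph{central} approximate identity consists of elements each of which commutes exactly (not ``in the limit'') with every element of $S(G)$, so any single member $e_\beta$ with $\phi_1(e_\beta)\neq0$, rescaled to $a_0=e_\beta/\phi_1(e_\beta)$, already satisfies the hypotheses of Proposition \ref{inner cont}; no iterated-limit or diagonal argument is needed there.
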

\begin{proof}
Using the main result of \cite{kot}, $G$ is a $SIN$ group if and
only if $S(G)$ has a central approximate identity. Then we have an
element $a_{0}\in S(G)$ such that $aa_{0}=a_{0}a$ and
$\phi_{1}(a_{0})=1$ for each $a\in S(G).$ Applying Proposition
\ref{inner cont}, approximate left $\phi_{1}$-biprojectivity of
$S(G)$ implies that $S(G)$ is approximately left
$\phi_{1}$-amenable. We can find a net $(m_{\alpha})$ in $S(G)$ such
that
$$||am_{\alpha}-\phi_{1}(a)m_{\alpha}||_{S(G)}\rightarrow 0,\quad \phi_{1}(m_{\alpha})\rightarrow 1\quad(a\in S(G)).$$
Since $||\cdot||_{L^{1}(G)}\leq ||\cdot||_{S(G)}$, then
$$||am_{\alpha}-\phi_{1}(a)m_{\alpha}||_{L^{1}(G)}\rightarrow 0,\quad \phi_{1}(m_{\alpha})\rightarrow 1\quad(a\in S(G)).$$
Let $f$ be an element of $S(G)$ such that $\phi_{1}(f)=1.$ Define
$f_{\alpha}=fm_{\alpha}$. For each $y\in G$ we have
$$\phi_{1}(\delta_{y}f)=\int_{G}\delta_{y} f(x)dx=\int_{G}f(y^{-1}x)dx=\int_{G}f(x)dx=\phi_{1}(f),$$
where $\delta_{y}$ denotes the point mass at $\{y\}$. We have
\begin{equation}
\begin{split}
||\delta_{y}f_{\alpha}-f_{\alpha}||_{L^{1}(G)}&=||(\delta_{y}f)m_{\alpha}-fm_{\alpha}||_{L^{1}(G)}\\
&\leq
||(\delta_{y}f)m_{\alpha}-m_{\alpha}||_{L^{1}(G)}+||m_{\alpha}-fm_{\alpha}||_{L^{1}(G)}\\
&\leq
||(\delta_{y}f)m_{\alpha}-\phi_{1}(\delta_{y}f)m_{\alpha}||_{L^{1}(G)}+||\phi_{1}(\delta_{y}f)m_{\alpha}-m_{\alpha}||_{L^{1}(G)}\\
&+||m_{\alpha}-\phi_{1}(f)m_{\alpha}
||_{L^{1}(G)}+||\phi_{1}(f)m_{\alpha}-fm_{\alpha}||_{L^{1}(G)}\\
&\rightarrow 0.
\end{split}
\end{equation}
On the other hand
$$\phi_{1}(f_{\alpha})=\phi_{1}(fm_{\alpha})=\phi_{1}(f)\phi_{1}(m_{\alpha})\rightarrow 1.$$
Since $|\phi_{1}(f_{\alpha})|\leq ||f_{\alpha}||_{L^{1}(G)}$, then
$(f_{\alpha})$ can stays away from $0$. Without loss of generality
we assume that $||f_{\alpha}||_{L^{1}(G)}\geq \frac{1}{2}.$ Define
$g_{\alpha}=\frac{|f_{\alpha}|}{||f_{\alpha}||_{L^{1}(G)}}$. It is
clear that $(g_{\alpha})$ is a bounded net in $L^{1}(G).$ Consider
$$||\delta_{y}g_{\alpha}-g_{\alpha}||_{L^{1}(G)}\leq 2||\delta_{y}|f_{\alpha}-|f_{\alpha}||||_{L^{1}(G)}\leq 2||\delta_{y}f_{\alpha}-f_{\alpha}||_{L^{1}(G)}
\rightarrow 0.$$ Now by  \cite[Exercise 1.1.6]{run}, $G$ is
amenable.
\end{proof}
We give a Banach algebra related to a locally compact group which is
never approximately left $\phi$-biprojective.
\begin{Example}
Let $G$ be a locally compact group and let $A(G)$ be the Fourier
algebra
with respect to $G$. Let  $T=\left(\begin{array}{cc} A(G)&A(G)\\
0&A(G)\\
\end{array}
\right)$ be a Triangular Banach algebra related to $A(G).$ Suppose
that $\phi\in\Delta(A(G))$. Define $\psi_{\phi}(\left(\begin{array}{cc} a&b\\
0&c\\
\end{array}
\right))=\phi(c)$ for all $a,b,c\in A(G)$. It is easy to see that
$\psi_{\phi}\in\Delta(T).$ Note that $A(G)$ is a commutative Banach
algebra, hence there exists $a_{0}\in A(G)$ such that
$aa_{0}=a_{0}a$ and $\phi(a_{0})=1$ for each $a\in A(G)$. Set $t_{0}=\left(\begin{array}{cc} a_{0}&0\\
0&a_{0}\\
\end{array}
\right)$, clearly $tt_{0}=t_{0}t, \psi_{\phi}(t_{0})=1,$ for every
$t\in T.$ Using Proposition \ref{inner cont}, we have $T$ is
approximately left $\psi_{\phi}$-amenable. Proceed a similar
arguments as in the  Example \ref{ex1}, we have a net $(a_{\alpha})$
in $A(G)$ such that $a-ba_{\alpha}\rightarrow 0$  for each $a,b\in
A(G).$ By taking $a\in A(G)$ such that $\phi(a)=1$ and $b\in
\ker\phi$, we have
$\phi(a)=\phi(a)-\phi(b)\phi(a_{\alpha})=\phi(a-ba_{\alpha})\rightarrow
0$ which is a contradiction.
\end{Example}
\begin{lemma}
Let $G$ be a locally compact group. Then $A(G)$ is approximately
left $\phi$-biprojective.
\end{lemma}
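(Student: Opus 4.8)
The plan is to deduce this from approximate left $\phi$-amenability and then invoke Proposition \ref{app left amen}. The crucial input is a classical fact already recalled in the introduction: for every locally compact group $G$, the Fourier algebra $A(G)$ is left $\phi$-amenable for each $\phi\in\Delta(A(G))$. Recall that the characters of $A(G)$ are precisely the point evaluations $\phi_{x}\colon u\mapsto u(x)$ for $x\in G$, under the standard identification of $\Delta(A(G))$ with $G$, and that for each such $\phi_{x}$ one can produce a bounded net $(u_{\alpha})$ in $A(G)$ witnessing left $\phi_{x}$-amenability, namely $u\cdot u_{\alpha}-\phi_{x}(u)u_{\alpha}\to 0$ and $\phi_{x}(u_{\alpha})\to 1$ for all $u\in A(G)$. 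This is exactly the content of the cited results (\cite{kan}, \cite{alagh}), so no new construction is required at this step.

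Next I would observe that left $\phi$-amenability trivially implies approximate left $\phi$-amenability: a bounded net satisfying the defining limits is in particular a (not necessarily bounded) net satisfying the same limits, so the very same net witnesses the weaker property. Hence $A(G)$ is approximately left $\phi$-amenable for every $\phi\in\Delta(A(G))$.

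Finally I would apply Proposition \ref{app left amen}, which asserts that any approximately left $\phi$-amenable Banach algebra is approximately left $\phi$-biprojective. Chaining the two implications yields that $A(G)$ is approximately left $\phi$-biprojective, as claimed. There is essentially no obstacle in the argument: it rests entirely on the background fact that $A(G)$ is left $\phi$-amenable, and the only point demanding care is the correct attribution of that fact to the literature, since everything downstream is the routine implication already recorded in Proposition \ref{app left amen}.
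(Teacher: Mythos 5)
Your proposal is correct and follows exactly the paper's own argument: cite the known fact that $A(G)$ is left $\phi$-amenable for every $\phi\in\Delta(A(G))$ (the paper uses \cite[Example 2.6]{kan}), note that this trivially gives approximate left $\phi$-amenability, and then apply Proposition \ref{app left amen}. There is nothing to add or correct.
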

\begin{proof}
By \cite[Example 2.6]{kan} $A(G)$ is left $\phi$-amenable for each
$\phi\in\Delta(G)$. Then is approximately left $\phi$-amenable.
Applying Proposition \ref{app left amen} implies that $A(G)$ is
approximately left $\phi$-biprojective, for each
$\phi\in\Delta(A(G)).$
\end{proof}
Let $G$ be a locally compact group and let $M(G)$ be the measure
algebra with respect to $G$. It is well-known that $L^{1}(G)$ is a
closed ideal of $M(G)$. So every character of $L^{1}(G)$ has an
extension to $M(G)$, particularly the augmentation character
$\phi_{1}$. We again denote this extension by $\phi_{1}$.
\begin{Theorem}
Let $G$ be a locally compact group. $M(G)$ is approximately left
$\phi_{1}$-biprojective if and only if $G$ amenable.
\end{Theorem}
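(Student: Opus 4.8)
The plan is to prove both implications by passing through approximate left $\phi_{1}$-amenability, in parallel with the proof of Theorem \ref{segal}, the extra work being that elements of $M(G)$ are measures rather than $L^{1}$-functions. I shall use that $M(G)$ is unital with identity $\delta_{e}$, that $L^{1}(G)$ is a closed ideal of $M(G)$, and that the extended augmentation character is the total-mass functional $\phi_{1}(\mu)=\mu(G)$, so that $\phi_{1}(\delta_{y})=1$ for each $y\in G$ and $\phi_{1}(\abs{\mu})=\|\mu\|$ for each $\mu\in M(G)$.

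For the easy direction, assume $G$ is amenable, so that $L^{1}(G)$ is left $\phi_{1}$-amenable; fix a bounded net $(f_{\alpha})$ in $L^{1}(G)$ with $f*f_{\alpha}-\phi_{1}(f)f_{\alpha}\to 0$ and $\phi_{1}(f_{\alpha})\to 1$, and pick $i_{0}\in L^{1}(G)$ with $\phi_{1}(i_{0})=1$. For $\mu\in M(G)$ I would split $\mu*f_{\alpha}=(\mu*i_{0})*f_{\alpha}+\mu*(f_{\alpha}-i_{0}*f_{\alpha})$: the first summand differs from $\phi_{1}(\mu)f_{\alpha}$ by a net tending to $0$, because $\mu*i_{0}\in L^{1}(G)$ and $\phi_{1}(\mu*i_{0})=\phi_{1}(\mu)$, while the second is dominated by $\|\mu\|\,\|f_{\alpha}-i_{0}*f_{\alpha}\|_{1}\to 0$. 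Hence $M(G)$ is left $\phi_{1}$-amenable, a fortiori approximately left $\phi_{1}$-amenable, and Proposition \ref{app left amen} yields approximate left $\phi_{1}$-biprojectivity.

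For the converse, suppose $M(G)$ is approximately left $\phi_{1}$-biprojective. As $\delta_{e}$ is central with $\phi_{1}(\delta_{e})=1$, Proposition \ref{inner cont} produces a net $(\mu_{\alpha})$ in $M(G)$ with $\mu*\mu_{\alpha}-\phi_{1}(\mu)\mu_{\alpha}\to 0$ and $\phi_{1}(\mu_{\alpha})\to 1$. Taking $\mu=\delta_{y}$ gives $\delta_{y}*\mu_{\alpha}-\mu_{\alpha}\to 0$ for every $y\in G$. Since $\|\mu_{\alpha}\|\ge\abs{\phi_{1}(\mu_{\alpha})}\to 1$, I may assume $\|\mu_{\alpha}\|\ge\tfrac12$ and pass to the probability measures $\nu_{\alpha}=\abs{\mu_{\alpha}}/\|\mu_{\alpha}\|$. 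Because convolution by $\delta_{y}$ is a surjective isometry of $M(G)$ commuting with the total-variation operation (so $\delta_{y}*\abs{\mu_{\alpha}}=\abs{\delta_{y}*\mu_{\alpha}}$), and because $\norm{\abs{\nu}-\abs{\eta}}\le\norm{\nu-\eta}$ in $M(G)$ --- which comes from $\abs{\abs{g}-\abs{h}}\le\abs{g-h}$ applied to the densities $g,h$ of $\nu,\eta$ against $\abs{\nu}+\abs{\eta}$ --- one gets $\|\delta_{y}*\nu_{\alpha}-\nu_{\alpha}\|\le 2\|\delta_{y}*\mu_{\alpha}-\mu_{\alpha}\|\to 0$.

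To reach Reiter's condition inside $L^{1}(G)$, fix a probability density $f\in L^{1}(G)$ and set $g_{\alpha}=\nu_{\alpha}*f$. Since $L^{1}(G)$ is an ideal, $g_{\alpha}\in L^{1}(G)$; it is positive with $\|g_{\alpha}\|_{1}=\phi_{1}(\nu_{\alpha})\phi_{1}(f)=1$, and $\delta_{y}*g_{\alpha}-g_{\alpha}=(\delta_{y}*\nu_{\alpha}-\nu_{\alpha})*f$ has $L^{1}$-norm at most $\|\delta_{y}*\nu_{\alpha}-\nu_{\alpha}\|\,\|f\|_{1}\to 0$. Thus $(g_{\alpha})$ is a net of probability densities in $L^{1}(G)$ with $\|\delta_{y}*g_{\alpha}-g_{\alpha}\|_{1}\to 0$ for each $y$, whence $G$ is amenable by \cite[Exercise 1.1.6]{run}. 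The only genuine obstacle is this transit from the measure level to the $L^{1}$ level: approximate invariance is manufactured in $M(G)$, whereas the Reiter criterion I want to quote lives in $L^{1}(G)$, and it is exactly the total-variation Lipschitz bound $\norm{\abs{\nu}-\abs{\eta}}\le\norm{\nu-\eta}$ followed by the smoothing $\nu_{\alpha}\mapsto\nu_{\alpha}*f$ that bridges them; the remaining verifications run as in Theorem \ref{segal}.
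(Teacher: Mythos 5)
Your proof is correct, and its skeleton coincides with the paper's: both directions are routed through approximate left $\phi_{1}$-amenability, using Proposition \ref{inner cont} (with the central element $\delta_{e}$) in one direction and Proposition \ref{app left amen} in the other. Where you genuinely diverge is in how the transfer between $M(G)$ and $L^{1}(G)$ is carried out. For the forward direction the paper first pushes approximate left $\phi_{1}$-amenability from $M(G)$ down to the closed ideal $L^{1}(G)$ by citing \cite[Lemma 3.1]{kan} (the character does not vanish on the ideal) and then reruns the argument of Theorem \ref{segal} inside $L^{1}(G)$; you instead stay at the measure level, manufacturing approximately invariant probability measures $\nu_{\alpha}=\abs{\mu_{\alpha}}/\norm{\mu_{\alpha}}$ from the identity $\abs{\delta_{y}\ast\mu}=\delta_{y}\ast\abs{\mu}$ together with the total-variation bound $\norm{\abs{\nu}-\abs{\eta}}\leq\norm{\nu-\eta}$ (correctly justified via Radon--Nikod\'{y}m densities against $\abs{\nu}+\abs{\eta}$), and only then smoothing into $L^{1}(G)$ by convolving with a fixed probability density $f$ before invoking \cite[Exercise 1.1.6]{run}. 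For the converse the paper merely asserts that left $\phi_{1}$-amenability of $L^{1}(G)$ lifts to $M(G)$ (again outsourced to the ideal lemma of \cite{kan}), whereas you prove the lift by hand through the splitting $\mu\ast f_{\alpha}=(\mu\ast i_{0})\ast f_{\alpha}+\mu\ast(f_{\alpha}-i_{0}\ast f_{\alpha})$, which is in effect the standard proof of that lemma. The trade-off is clear: your version is self-contained, never relying on the hereditary lemmas for ideals, at the price of the measure-theoretic verifications (pushforward of total variation, positivity and norm of $\nu_{\alpha}\ast f$); the paper's version is shorter because it reuses Theorem \ref{segal} and the cited lemma as black boxes. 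All the individual steps you wrote check out, including the norm computation $\norm{\nu_{\alpha}\ast f}_{1}=\phi_{1}(\nu_{\alpha})\phi_{1}(f)=1$ and the estimate $\norm{\delta_{y}\ast\nu_{\alpha}-\nu_{\alpha}}\leq 2\norm{\delta_{y}\ast\mu_{\alpha}-\mu_{\alpha}}$.
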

\begin{proof}
Suppose that $M(G)$ is approximately left $\phi_{1}$-biprojective.
Since $M(G)$ is unital, by Proposition \ref{inner cont} $M(G)$ is
approximately left $\phi_{1}$-amenable. Note that $L^{1}(G)$ is a
closed ideal of $M(G)$ and $\phi_{1}|_{L^{1}(G)}\neq 0$ so by
\cite[Lemma 3.1]{kan} $L^{1}(G)$ is approximately left
$\phi_{1}$-amenable. Using similar method as in the proof of Theorem
\ref{segal} $G$ is amenable.

For converse, let $G$ be an amenable group. By Johnson theorem
$L^{1}(G)$ is amenable. Hence $L^{1}(G)$ is left
$\phi_{1}$-amenable. Hence  $M(G)$ is left $\phi_{1}$-amenable. So
$M(G)$ is approximately left $\phi_{1}$-amenable. Using Proposition
\ref{app left amen} implies that $M(G)$ is approximately left
$\phi_{1}$-biprojective.
\end{proof}
\begin{cor}
Let $G$ be a locally compact group. $M(G)$ is approximately left
character biprojective if and only if $G$ is discrete and amenable.
\end{cor}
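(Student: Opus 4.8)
The plan is to deduce the corollary from the preceding theorem together with Propositions \ref{inner cont} and \ref{app left amen}, so that each direction collapses to a known amenability statement. For necessity, suppose $M(G)$ is approximately left character biprojective, i.e. approximately left $\phi$-biprojective for every $\phi\in\Delta(M(G))$. Since $M(G)$ is unital, its identity $\delta_{e}$ is central and satisfies $\phi(\delta_{e})=1$ for every character $\phi$, so Proposition \ref{inner cont} (applied with $a_{0}=\delta_{e}$) makes $M(G)$ approximately left $\phi$-amenable for each $\phi\in\Delta(M(G))$. Thus $M(G)$ is approximately character amenable, and the characterisation of (approximate) character amenability of measure algebras in \cite{agha} yields that $G$ is discrete and amenable. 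Amenability alone is in fact free: taking $\phi=\phi_{1}$ and invoking the preceding theorem already gives it, without appeal to \cite{agha}.

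For sufficiency I would run the implications backwards through amenability. If $G$ is discrete then $M(G)=\ell^{1}(G)$, and if $G$ is moreover amenable then by Johnson's theorem $\ell^{1}(G)$ is amenable. An amenable Banach algebra is left $\phi$-amenable, hence approximately left $\phi$-amenable, for every $\phi\in\Delta(M(G))$; Proposition \ref{app left amen} then promotes this to approximate left $\phi$-biprojectivity for each such $\phi$. As $\phi$ was arbitrary, $M(G)$ is approximately left character biprojective, completing the equivalence.

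The genuine difficulty lies in the discreteness half of necessity. Amenability is cheap, as noted, but discreteness is the delicate structural point, and I would prefer not to lean entirely on the external citation. A self-contained route is to exhibit a character that detects non-discreteness: when $G$ is not discrete the continuous measures $M_{c}(G)$ form a proper closed ideal containing $L^{1}(G)$, with $M(G)/M_{c}(G)\cong\ell^{1}(G_{d})$, so composing the quotient map with the augmentation character of $\ell^{1}(G_{d})$ produces $\phi\in\Delta(M(G))$ with $\phi|_{L^{1}(G)}=0$. The task is then to show that approximate left $\phi$-amenability for this singular character is incompatible with $L^{1}(G)$ being a nonzero closed ideal sitting inside $\ker\phi$; converting the defining net $(\mu_{\alpha})$ of approximate left $\phi$-amenability into the required contradiction---paralleling the singular-measure obstruction used in \cite{agha}---is the step I expect to absorb the real work.
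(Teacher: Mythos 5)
Your proof is correct, and while the necessity direction coincides with the paper's, your sufficiency direction takes a genuinely different route. The paper handles the converse by quoting \cite[Proposition 4.2]{ghah pse} to get that $M(G)$ is pseudo-amenable when $G$ is discrete and amenable, and then invokes Remark \ref{rem1} (pseudo-amenability implies approximate left $\phi$-biprojectivity) to conclude. You instead use discreteness to identify $M(G)$ with $\ell^{1}(G)$, apply Johnson's theorem to get amenability of $\ell^{1}(G)$, pass from amenability to left $\phi$-amenability and hence to approximate left $\phi$-amenability for every character, and finish with Proposition \ref{app left amen}. Your chain is exactly the pattern the paper itself uses in the converse of the preceding theorem (the $\phi_{1}$ case), extended to all characters, so it is arguably more uniform with the surrounding text; it also leans on Proposition \ref{app left amen}, which is proved in full, rather than on Remark \ref{rem1}, which is only sketched. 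The paper's route is one step shorter but imports the pseudo-amenability machinery of Ghahramani--Zhang. Two small remarks: first, your proposed ``self-contained'' treatment of discreteness via a character vanishing on $L^{1}(G)$ is left incomplete (you say yourself the contradiction step absorbs the real work), but this does not damage the proof since your primary argument through the characterization in \cite{agha} already settles necessity --- the sketch is optional and could simply be dropped. Second, both your proof and the paper's share the same slight looseness: Proposition \ref{inner cont} yields approximate left $\phi$-amenability only for $\phi\in\Delta(M(G))$, whereas approximate character amenability as defined in \cite{agha} also requires the case $\phi=0$; one must check that the cited characterization of discreteness and amenability of $G$ needs only the nonzero characters. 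This affects you and the paper equally.
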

\begin{proof}
Let $G$ be a locally compact group. Suppose that  $M(G)$ is
approximately left character biprojective. Since $M(G)$ is  unital,
then by Proposition \ref{inner cont} approximately character
biprojectivity implies that $M(G)$ is approximately character
amenable. Applying \cite[Theorem 7.2]{agha} $G$ is discrete and
amenable.

For converse, let $G$ be amenable and discrete.  Then by
\cite[Proposition 4.2]{ghah pse} $M(G)$ is pseudo-amenable. Hence by
Remark \ref{rem1}, $M(G)$ is approximately character left
biprojective.
\end{proof}

 We give a Banach algebra which is not pseudo-amenable but is
approximately left $\phi$-biprojective.
\begin{Example}
Let $G$ be an infinite compact group. It is well-known that for
$p\geq 1,$ $\Delta(L^{p}(G))=\{\phi_{\rho}|\rho\in \widehat{G}\},$
where $\widehat{G}$ is the dual group of $G$ and
$\phi_{\rho}(f)=\int_{G}f(x)\overline{\rho(x)}dx,$ see \cite{hew}.
Since $G$ is compact $\widehat{G}\subseteq L^{\infty}(G)\subseteq
L^{p}(G)$. It is easy to see that $$f\rho=\phi_{\rho}(f)\rho,\quad
\phi_{\rho}(\rho)=\int_{G}\rho(x)\overline{\rho(x)}dx=\int_{G}1dx=1,\quad
(f\in L^{1}(G))$$ (we assume that $dx$ is the normalized left Haar
measure on $G$). Since $\rho \in\L^{p}(G)$, then the map $f\mapsto
f\rho$ is $w^{*}$-continuous on $L^{p}(G)^{**}$. Hence we have
$$f\rho=\tilde{\phi}_{\rho}(f)\rho,\quad \phi_{\rho}(\rho)=\tilde{\phi}_{\rho}(\rho)=1,\quad (f\in {L^{p}(G)}^{**}).$$
It means that $L^{p}(G)^{**}$ is left
$\tilde{\phi}_{\rho}$-amenable, so $L^{p}(G)^{**}$ is approximately
left $\tilde{\phi}_{\rho}$-amenable. Therefore by Proposition
\ref{app left amen}, $L^{p}(G)^{**}$ is approximately left
$\tilde{\phi}_{\rho}$-biprojective. Particularly ${L^{1}(G)}^{**}$
is approximately left $\tilde{\phi}_{\rho}$-biprojective but if
${L^{1}(G)}^{**}$ is pseudo-amenable, then by \cite[Proposition
4.2]{ghah pse} $G$ is discrete and amenable. Since $G$ is compact,
then $G$ must be finite which is a contradiction.
\end{Example}
\begin{Theorem}
Let $G$ be a locally compact $SIN$ group. $L^{1}(G)^{**}$ is
approximately left character biprojective if and only if $G$ is
amenable.
\end{Theorem}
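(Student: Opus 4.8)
The statement is an equivalence, so I would prove the two implications separately, modelling both on the measure-algebra theorem established just above. The one permanent structural input I would use is that $L^{1}(G)$ has a bounded approximate identity, so $L^{1}(G)^{**}$ with the first Arens product is unital; write $E$ for its identity. Since $E$ is central and $\tilde{\phi}_{1}(E)=1$ for the (extended) augmentation character $\tilde{\phi}_{1}\in\Delta(L^{1}(G)^{**})$, Proposition \ref{inner cont} is available for $\tilde{\phi}_{1}$ with no extra hypothesis on $G$.

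For the forward direction, assume $L^{1}(G)^{**}$ is approximately left character biprojective, in particular approximately left $\tilde{\phi}_{1}$-biprojective. By the preceding remark and Proposition \ref{inner cont}, $L^{1}(G)^{**}$ is approximately left $\tilde{\phi}_{1}$-amenable. I would then pass to the algebra itself, using the standard correspondence between approximate left $\phi$-amenability of a Banach algebra and of its bidual (as developed in \cite{kan} and \cite{agha}) to deduce that $L^{1}(G)$ is approximately left $\phi_{1}$-amenable. From this point the argument is exactly that of Theorem \ref{segal}: choose $f$ with $\phi_{1}(f)=1$, set $f_{\alpha}=fm_{\alpha}$ for the $\phi_{1}$-amenable net $(m_{\alpha})$, normalise $g_{\alpha}=|f_{\alpha}|/\|f_{\alpha}\|_{L^{1}(G)}$, verify that $\|\delta_{y}\ast g_{\alpha}-g_{\alpha}\|_{L^{1}(G)}\to 0$, and conclude amenability of $G$ via \cite[Exercise 1.1.6]{run}. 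The $SIN$ hypothesis enters precisely to supply the centrality needed to run the translation estimate cleanly at the $L^{1}$-level.

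For the converse, assume $G$ is amenable. Then $L^{1}(G)$ is amenable by Johnson's theorem, hence left $\phi$-amenable for every $\phi\in\Delta(L^{1}(G))$. Lifting through the same bidual correspondence gives that $L^{1}(G)^{**}$ is left $\tilde{\phi}$-amenable, hence approximately left $\tilde{\phi}$-amenable, for every character not vanishing on $L^{1}(G)$; Proposition \ref{app left amen} then yields approximate left $\tilde{\phi}$-biprojectivity for all such characters. The computation for $L^{p}(G)^{**}$ in the example above serves as a concrete template for producing these $\phi$-amenable nets.

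The main obstacle in both directions is the passage between $L^{1}(G)$ and its bidual. In the forward direction one must extract a genuine net in $L^{1}(G)$ out of approximate left $\tilde{\phi}_{1}$-amenability of $L^{1}(G)^{**}$: since $L^{1}(G)$ is only weak$^{*}$-dense, and not an ideal, in $L^{1}(G)^{**}$, this descent is the delicate step and is exactly where the $SIN$ structure is spent. In the backward direction the remaining difficulty is to cover \emph{every} $\phi\in\Delta(L^{1}(G)^{**})$, including those vanishing on $L^{1}(G)$; here I would argue that amenability of $L^{1}(G)$, together with the central unit $E$ and the bidual module structure, still forces the required nets to exist for these characters, thereby completing the verification that $L^{1}(G)^{**}$ is approximately left character biprojective.
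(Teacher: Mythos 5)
Your forward direction has a genuine gap at its foundation. You claim that since $L^{1}(G)$ has a bounded approximate identity, $L^{1}(G)^{**}$ with the first Arens product is unital with a \emph{central} identity $E$, so that Proposition \ref{inner cont} applies ``with no extra hypothesis on $G$''. This is false for precisely the groups the theorem is about: a $w^{*}$-cluster point $E$ of a bounded approximate identity is only a one-sided (mixed) identity for the Arens product. It is a two-sided identity only when $L^{\infty}(G)\cdot L^{1}(G)=L^{\infty}(G)$, and since $L^{\infty}(G)\cdot L^{1}(G)$ consists of uniformly continuous functions this forces $G$ to be discrete; a fortiori $E$ is neither a unit nor central for non-discrete $G$. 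The proof of Proposition \ref{inner cont} uses the exact identity $\rho_{\alpha}(aa_{0})=\rho_{\alpha}(a_{0}a)$, so a genuinely central element $a_{0}$ with $\tilde{\phi}(a_{0})=1$ is indispensable, and your $E$ does not supply it. This is exactly what the $SIN$ hypothesis is for in the paper's proof: by the main result of \cite{kot}, $L^{1}(G)$ has a \emph{central} approximate identity, whence a central $a_{0}\in L^{1}(G)$ with $\phi(a_{0})=1$; and because multiplication by a fixed element of $L^{1}(G)$ is $w^{*}$-continuous in the other variable in both orders, $a_{0}$ remains central in $L^{1}(G)^{**}$, so Proposition \ref{inner cont} applies. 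You have therefore misallocated the hypothesis: $SIN$ is spent on centrality for Proposition \ref{inner cont}, not on the ``translation estimate'' (the estimate in Theorem \ref{segal} uses only approximate left $\phi_{1}$-amenability, never centrality), and the descent from the bidual to $L^{1}(G)$ is not the delicate step --- it is the general fact \cite[Proposition 3.9]{agha}, valid for any Banach algebra, after which the paper simply quotes \cite[Theorem 7.1]{agha} to conclude amenability of $G$ (your re-run of the Segal-algebra argument would also work at that point).

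Your backward direction coincides with the paper's: Johnson's theorem, then \cite[Proposition 3.4]{kan} to lift left $\phi$-amenability to $L^{1}(G)^{**}$, then Proposition \ref{app left amen}. The issue you flag there --- characters of $L^{1}(G)^{**}$ not of the form $\tilde{\phi}$, in particular those vanishing on $L^{1}(G)$ --- is a real one, but your proposal does not resolve it: ``I would argue that \dots forces the required nets to exist'' is a placeholder, not an argument. (For what it is worth, the paper's own proof also treats only the canonical extensions $\tilde{\phi}$ with $\phi\in\Delta(L^{1}(G))$.) The decisive defect, however, is in the forward direction: without the $SIN$-supplied central element, your argument cannot start.
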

\begin{proof}
Suppose that $L^{1}(G)^{**}$ is approximately left character
biprojective. Since $G$ is a $SIN$ group, then by the main result of
\cite{kot}, $L^{1}(G)$ has a central approximate identity. Then for
each $\phi\in\Delta(L^{1}(G))$ there exists an element $a_{0}\in
L^{1}(G)$ such that $aa_{0}=a_{0}a$ and $\phi(a_{0})=1,$ for each
$a\in L^{1}(G)$. Since for each $a\in L^{1}(G)$ two maps $b\mapsto
ab$ and $a\mapsto ba$ is $w^{*}$-continuous on $L^{1}(G)^{**}$, we
have $$aa_{0}=a_{0}a,\quad
\phi(a_{0})=\widetilde{\phi}(a_{0})=1\quad (a\in L^{1}(G)^{**}).$$
Using Proposition \ref{app left amen}, implies  that $L^{1}(G)^{**}$
is approximately left $\phi$-amenable for all
$\phi\in\Delta({L^{1}(G)}^{**})$. By \cite[Proposition 3.9]{agha}
$L^{1}(G)$ is approximately left $\phi$-amenable. Hence
\cite[Theorem 7.1]{agha} implies that $G$  is amenable.

For converse, suppose that $G$ is amenable. So by Johnson theorem,
$L^{1}(G)$ is amenable, hence $L^{1}(G)$ is left $\phi-$amenable. By
\cite[Proposition 3.4]{kan} we have $L^{1}(G)^{**}$ is left
$\tilde{\phi}-$amenable for all $\phi\in\Delta(L^{1}(G))$. Hence
$L^{1}(G)^{**}$ is approximately left $\tilde{\phi}-$amenable for
all $\phi\in\Delta(L^{1}(G))$. Now by Theorem \ref{app left amen}
$L^{1}(G)^{**}$ is approximately left character biprojective.
\end{proof}
It is well-known that for each semigroup $S$ there exists a partial
order on   $E(S)$, where $E(S)$ is the set of idempotents of $S$.
Indeed
$$s\leq t\Leftrightarrow s=st=ts\quad (s,t \in E(S)).$$

The  semigroup $S$ is called inverse semigroup, if for each $s\in S$
there exists $s^{*}\in S$ such that $ss^{*}s=s^{*}$ and
$s^{*}ss^{*}=s$ for each $s\in S.$ Inverse semigroup $S$ is called
Clifford semigroup if for each $s\in S$ there exists $s^{*}\in S$
such that $ss^{*}=s^{*}s.$ There exists a partial order on each
inverse semigroup $S$, that is,
$$s\leq t\Leftrightarrow s=ss^{*}t\quad (s,t \in S).$$
Note that these two partial orders on an inverse semigroup are the
same. Let $(S,\leq)$ be an inverse semigroup. For each $s\in S$, set
$(x]=\{y\in S|y\leq x\}$. $S$ is called uniformly locally finite if
$\sup\{|(x]|<\infty|x\in S\}$. Suppose that $S$ is an inverse
semigroup and $e\in E(S)$. $G_{e}=\{s\in S|ss^{*}=s^{*}s=e\}$ is a
maximal subgroup of $S$ with respect to $e$. For more information
 about
 semigroup theory see \cite{how}.
\begin{Theorem}
Let $S=\cup_{e\in E(S)}G_{e}$ be a Clifford semigroup such that
$E(S)$ is uniformly locally finite.  $\ell^{1}(S)$ is approximately
left character biprojective if and only if $\ell^{1}(S)$
pseudo-amenable.
\end{Theorem}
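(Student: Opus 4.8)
The plan is to prove the two implications separately; the implication from pseudo-amenability is immediate from the general theory, while the converse is the substantive part and will be reduced to the amenability of the maximal subgroups $G_{e}$.

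For the easy direction, suppose $\ell^{1}(S)$ is pseudo-amenable. By Remark \ref{rem1}, $\ell^{1}(S)$ is then approximately left $\phi$-biprojective for every $\phi\in\Delta(\ell^{1}(S))$, and hence approximately left character biprojective by Definition \ref{def}. No use of the Clifford structure is needed here.

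For the converse the key structural feature I would exploit is that in a Clifford semigroup every idempotent is central, so each $\delta_{e}$ with $e\in E(S)$ is a central element of $\ell^{1}(S)$. Fix $e\in E(S)$ and define $q_{e}:\ell^{1}(S)\rightarrow\ell^{1}(G_{e})$ on the canonical basis by $q_{e}(\delta_{s})=\delta_{se}$ when $s^{*}s\geq e$ and $q_{e}(\delta_{s})=0$ otherwise. Using that $(st)^{*}(st)=(s^{*}s)(t^{*}t)$ in a Clifford semigroup together with the fact that $fg\geq e$ if and only if $f\geq e$ and $g\geq e$ in the semilattice $E(S)$, I would check that $q_{e}$ is a norm-decreasing surjective algebra homomorphism which fixes $\ell^{1}(G_{e})$ pointwise. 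Writing $\phi_{e}^{\mathrm{aug}}$ for the augmentation character of $\ell^{1}(G_{e})$ and setting $\phi_{e}=\phi_{e}^{\mathrm{aug}}\circ q_{e}$, I obtain a character of $\ell^{1}(S)$ with $\phi_{e}(\delta_{e})=1$. Since $\delta_{e}$ is central and $\ell^{1}(S)$ is approximately left $\phi_{e}$-biprojective, Proposition \ref{inner cont} (applied with $a_{0}=\delta_{e}$) shows that $\ell^{1}(S)$ is approximately left $\phi_{e}$-amenable.

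I would then push this property down along $q_{e}$: if a net $(a_{\alpha})$ witnesses approximate left $\phi_{e}$-amenability of $\ell^{1}(S)$, then $(q_{e}(a_{\alpha}))$ witnesses approximate left $\phi_{e}^{\mathrm{aug}}$-amenability of $\ell^{1}(G_{e})$, because $q_{e}$ is a bounded homomorphism with $\phi_{e}^{\mathrm{aug}}\circ q_{e}=\phi_{e}$. As $G_{e}$ is a discrete group and $\phi_{e}^{\mathrm{aug}}$ is its augmentation character, \cite[Theorem 7.1]{agha} forces $G_{e}$ to be amenable. Since $e\in E(S)$ was arbitrary, every maximal subgroup $G_{e}$ is amenable, and invoking the known characterization of pseudo-amenability for $\ell^{1}(S)$ of a Clifford semigroup with $E(S)$ uniformly locally finite — that pseudo-amenability is equivalent to amenability of all the $G_{e}$ — I conclude that $\ell^{1}(S)$ is pseudo-amenable, completing the converse. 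I expect the main obstacle to lie not in the homomorphism $q_{e}$ (whose multiplicativity is a routine semilattice-of-groups computation) nor in the descent of approximate left $\phi$-amenability, but in correctly invoking that external structural characterization: it is precisely there that the hypothesis of uniform local finiteness of $E(S)$ is essential, and assembling the amenable pieces $G_{e}$ into a pseudo-amenability net for the whole algebra $\ell^{1}(S)$ is the delicate step.
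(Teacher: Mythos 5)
Your proof is correct, and it shares the paper's overall skeleton — the easy direction via Remark \ref{rem1}, and the hard direction by proving each maximal subgroup $G_{e}$ amenable via \cite[Theorem 7.1]{agha} and then citing \cite[Corollary 3.9]{rost} — but your reduction from $\ell^{1}(S)$ to $\ell^{1}(G_{e})$ is genuinely different. The paper first invokes Ramsden's structure theorem \cite[Theorem 2.16]{rams}, $\ell^{1}(S)\cong\ell^{1}-\oplus_{e\in E(S)}\ell^{1}(G_{e})$, so that (the image of) $\ell^{1}(G_{e})$ sits as a closed ideal on which the extended augmentation character is non-zero; it then applies the hereditary Proposition \ref{ideal} to pass approximate left $\phi_{1}$-biprojectivity down to that ideal, and only afterwards applies Proposition \ref{inner cont} to the \emph{unital} algebra $\ell^{1}(G_{e})$ to obtain approximate left $\phi_{1}$-amenability of $\ell^{1}(G_{e})$. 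You instead use the retraction $q_{e}$ coming from the semilattice-of-groups structure (a norm-decreasing surjective homomorphism whose multiplicativity rests on centrality of idempotents and on the equivalence $fg\geq e\Leftrightarrow f\geq e \text{ and } g\geq e$), apply Proposition \ref{inner cont} directly to $\ell^{1}(S)$ with the central element $a_{0}=\delta_{e}$, and then push approximate left $\phi_{e}$-amenability forward along $q_{e}$, which is a routine verification using surjectivity and $\phi_{e}^{\mathrm{aug}}\circ q_{e}=\phi_{e}$. Your route buys two things: the reduction step requires no structure theorem at all (it works for an arbitrary Clifford semigroup, with uniform local finiteness of $E(S)$ entering only in the final citation of \cite[Corollary 3.9]{rost}), and it sidesteps the slightly delicate point that $\ell^{1}(G_{e})$ is an ideal only after transporting through Ramsden's isomorphism, not as a convolution subalgebra of $\ell^{1}(S)$ itself. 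The paper's route buys brevity given its toolkit, since Propositions \ref{ideal} and \ref{inner cont} are already established and do all the work once the decomposition is quoted. Both arguments are sound, and your closing assessment is accurate: the assembly of the amenable pieces $G_{e}$ into pseudo-amenability of $\ell^{1}(S)$ is exactly what is outsourced to \cite{rost}, in your proof and in the paper's alike.
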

\begin{proof}
Suppose that $\ell^{1}(S)$ is approximately left character
biprojective. By \cite[Theorem
2.16]{rams},$\ell^{1}(S)\cong\ell^{1}-\oplus_{e\in
E(S)}\ell^{1}(G_{e})$. Since $\ell^{1}(G_{e})$ has a character
$\phi_{1}$(at least augmentation character), then this character
extends on $\ell^{1}(S)$ which we denote this extension with
$\phi_{1}$ again. So $\ell^{1}(S)$ is approximately left
$\phi_{1}$-biprojective. Since $\phi_{1}|_{\ell^{1}(G_{e})}\neq 0$
and  $\ell^{1}(G_{e})$ is a closed ideal of $\ell^{1}(S)$, by
Proposition \ref{ideal}, $\ell^{1}(G_{e})$ is approximately left
$\phi_{1}$-biprojective. On the other hand since $\ell^{1}(G_{e})$
is unital, by Proposition \ref{inner cont}, $\ell^{1}(G_{e})$ is
approximately left $\phi_{1}$-amenable. So by \cite[Theorem
7.1]{agha}, $G_{e}$  is amenable for all $e\in E(S)$. Thus by
\cite[Corollary 3.9]{rost} $\ell^{1}(S)$ is pseudo-amenable.

Converse is true by Remark \ref{rem1}.
\end{proof}

\section{Examples}
We give a Banach algebra  which is approximately left
$\phi$-biprojective but is not $\phi$-biprojective.
\begin{Remark}
Consider the semigroup $\mathbb{N}_{\vee}$,  with semigroup
operation $m\vee n=\max\{m,n\}$, where $m$ and $n$ are in
$\mathbb{N}$. The character space
$\Delta(\ell^{1}(\mathbb{N_{\vee}}))$ precisely consists   of the
all functions $\phi_{n}:\ell^{1}(\mathbb{N_{\vee}})\rightarrow
\mathbb{C}$ defined by
$\phi_{n}(\sum_{i=1}^{\infty}\alpha_{i}\delta_{i})=\sum_{i=1}^{n}\alpha_{i}$
for every $n\in\mathbb{N}\cup\{\infty\}$. For more information about
this semigroup algebra see \cite{dale}.   In \cite{sah1}, author
with A. Pourabbas showed that $\ell^{1}(\mathbb{N}_{\vee})$ is
$\phi_{n}$-biflat for each $n\in\mathbb{N}\cup\{\infty\}$. Since
this algebra is commutative, by \cite[Proposition 3.3]{sah1}
$\ell^{1}(\mathbb{N}_{\vee})$ is left $\phi_{n}$-amenable. Then
$\ell^{1}(\mathbb{N}_{\vee})$ is approximately left
$\phi_{n}$-amenable. By Proposition \ref{app left amen},
$\ell^{1}(\mathbb{N}_{\vee})$ is approximately character left
biprojective. Hence $\ell^{1}(\mathbb{N}_{\vee})$ is approximately
left $\phi_{\infty}$ biprojective. Moreover we showed that
$\ell^{1}(\mathbb{N}_{\vee})$ is $\phi_{n}$-biprojective for each
$n\in\mathbb{N}.$ But if $\ell^{1}(\mathbb{N}_{\vee})$ is
$\phi_{\infty}$-biprojective, then $\ell^{1}(\mathbb{N}_{\vee})$ is
character biprojective. So by \cite[Remark 3.6]{sah rom} and
\cite[Lemma 3.7]{sah rom}, the maximal ideal space of
$\ell^{1}(\mathbb{N}_{\vee})$ is finite, which is impossible because
the maximal ideal space of $\ell^{1}(\mathbb{N}_{\vee})$ is
$\mathbb{N}\cup\{\infty\}$.
\end{Remark}
We give a Banach algebra which is neither left $\phi$-amenable nor
$\phi$-biflat  but is approximately left $\phi$-biprojective. Hence
the converse of Theorem \ref{phi biflat} is not always true.
\begin{Example}
We denote $\ell^{1} $ for the set of all sequences $a=((a_{n}))$ of
complex numbers with $||a||=\sum^{\infty}_{n=1}|a_{n}|<\infty .$
Equip $\ell^{1} $ with the following product: ý
\begin{eqnarray*}ý
&\textit{(þ$a\ast
b)(n)=$}\begin{cases}a(n)b(n)\,\,\,\,\,\,\,\,\,\,\,\,\,\,\,\,\,\,\,\hspace{3.5cm}n=1\cr
ý
a(1)b(n)+b(1)a(n)+a(n)b(n)\,\,\,\,\,\,\,\,\,\,\,\,\,\,\,\,\,\,\,\,n>1,
ý\end{cases}\\ý
\end{eqnarray*}ý
and $||\cdot||$ becomes a Banach algebra. It is easy to see that
$\Delta(\ell^{1})=\{\phi_{1},\phi_{1}+\phi_{n}\}$, where
$\phi_{n}(a)=a(n)$ for each $a\in \ell^{1}$. By \cite[Example
2.9]{nem col} $\ell^{1}$ is not left $\phi_{1}$-amenable. Suppose
conversely that $A$ is $\phi$-biflat. Since $\ell^{1}$ is
commutative, by \cite[Proposition 3.3]{sah1} $\phi$-biflatness
follows that  $\ell^{1}$ is left $\phi_{1}$-amenable, which is a
contradiction.

Using \cite[Example 2.9]{nem col}, $\ell^{1}$ is approximately left
$\phi_{1}$-amenable. Then Proposition \ref{app left amen} implies
that $\ell^{1}$ is approximately left $\phi_{1}$-biprojective.
Moreover \cite[Example 2.9]{nem col} showed that $\ell^{1}$ is left
$\phi_{1}+\phi_{n}$-amenable so $\ell^{1}$ is approximately left
$\phi_{1}+\phi_{n}$-biprojective. Hence $\ell^{1}$ is approximately
left character biprojective.
\end{Example}
We give a Banach algebra which is approximately left
$\phi$-biprojective but is not approximately left $\phi$-amenable.
Then the converse of Proposition \ref{app left amen} is not always
true.
\begin{Example}
Let $S$ be a left zero semigroup with $|S|\geq 2$, that is a
semigroup with product $st=s$ for all $s,t\in S.$ For the semigroup
algebra $\ell^{1}(S)$, we have $fg=\phi_{S}(g)f$, where $\phi_{S}$
is the augmentation character on $\ell^{1}(S)$. We claim that
$\ell^{1}(S)$ is approximately left $\phi_{S}$-biprojective. To see
this, let $f_{0}\in \ell^{1}(S)$ be an element such that
$\phi_{S}(f_{0})=1.$ Define $\rho:\ell^{1}(S)\rightarrow
\ell^{1}(S)\otimes_{p}\ell^{1}(S)$ by $\rho(f)=f\otimes f_{0}$ for
all $f\in \ell^{1}(S)$. It is easy to see that $$f\cdot
\rho(g)=\rho(fg),\quad \rho(fg)=\phi_{S}(g)\rho(f),\quad
\phi_{S}\circ\pi_{A}\circ\rho(f)=\phi_{S}(f_{0}f)=\phi(f),\quad(f,g\in
\ell^{1}(S)).$$ We show that $\ell^{1}(S)$ is not approximately left
$\phi$-amenable, provided that $|S|\geq 2.$ We go toward a
contradiction and suppose that $\ell^{1}(S)$ is approximately left
$\phi$-amenable. Then there exists a net $(f_{\alpha})$ in
$\ell^{1}(S)$ such that $$\phi_{S}(f_{\alpha})=1, \quad
ff_{\alpha}-\phi_{S}(f)f_{\alpha}\rightarrow 0\quad (f\in
\ell^{1}(S)).$$ It follows that $f-\phi_{S}(f)f_{\alpha}\rightarrow
0$ for each $f\in \ell^{1}(S)$. Since $S$ has at least two elements
$s_{1},s_{2}$, take $f=\delta_{s_{1}}$ and  $f=\delta_{s_{1}}$ and
put it in $f-\phi_{S}(f)f_{\alpha}\rightarrow 0$. It follows that
$\delta_{s_{1}}=\delta_{s_{2}}$, so $s_{1}=s_{2}$ which is
impossible.
\end{Example}


\begin{thebibliography}{HD}

\normalsize \baselineskip=17pt


\bibitem{agha} H. P. Aghababa, L. Y. Shi and Y. J. Wu; {\it Generalized notions of character
amenability} Act. Math. Sin, {\bf 29} (2013) 1329-1350.
\bibitem{alagh} M. Alaghmandan, R. Nasr Isfahani and M. Nemati; {\it Character amenability and contractibility of abstract Segal algebras},
 Bull. Aust. Math. Soc, {\bf 82} (2010) 274-281.
\bibitem{dale} H. G. Dales and R. J. Loy; {\it Approximate amenability of semigroup algebras and Segal algebras}, Dissertationes
Math. (Rozprawy Mat.) 474 (2010).
\bibitem{rost} M. Essmaili,  M. Rostami,  A. Pourabbas; {\it Pseudo-amenability of certain semigroup
algebras}, Semigroup Forum (2011), 478-484.
\bibitem{gen 1}
F. Ghahramani and R. J. Loy; {\it Generalized notions of
amenability}, J. Func. Anal. {\bf 208} (2004), 229-260.
\bibitem{gen 2}
F. Ghahramani, R. J. Loy and Y. Zhang; {\it Generalized notions of
amenability II}, J. Func. Anal. {\bf 254} (2008), 1776-1810.
\bibitem{ghah pse} F. Ghahramani, Y. Zhang; {\it Pseudo-amenable and pseudo-contractible Banach algebras}, Math. Proc. Cambridge Philos. Soc. {\bf 142} (2007) 111-123.
\bibitem{hel} A. Ya. Helemskii; {The homology of Banach and topological
    algebras}, Kluwer, Academic Press, Dordrecht, 1989.
\bibitem{hew} E. Hewitt and K. A. Ross, Abstract harmonic analysis I,  Springer-Verlag, Berlin, (1963).
\bibitem{how} J. Howie; {\it Fundamental of Semigroup Theory}. London Math. Soc
Monographs, vol. {\bf 12}. Clarendon Press, Oxford (1995).
\bibitem{Joh} B. E. Johnson; {\it Cohomology in Banach algebras}, Mem. Amer. Math. Soc. {\bf 127}
(1972).
\bibitem{kan} E. Kaniuth, A. T. Lau and J. Pym; {\it On $\phi$-amenability of Banach algebras}, Math. Proc. Cambridge Philos. Soc. {\bf 144} (2008) 85-96.
\bibitem{kot} E. Kotzmann and H. Rindler; {\it Segal algebras on non-abelian groups}, Trans. Amer.
Math. Soc. {237} (1978), 271-281.
\bibitem{nem col} R. Nasr Isfahani and M. Nemati; {\it Character pseudo-amenabilty of Banach algebras}, Colloq. Math.{\bf 132} (2013),
177-193.
\bibitem{rei} H. Reiter; {\it $L^{1}$-algebras and Segal Algebras}, Lecture Notes in Mathematics {\bf 231} (Springer, 1971).
\bibitem{rams} P. Ramsden; {\it Biflatness of semigroup algebras}. Semigroup Forum {\bf 79}, (2009) 515-530.
\bibitem{run} V. Runde;  Lectures on amenability , Springer, New York, 2002.
\bibitem{saha} A. Sahami; {\it On biflatness and  $\phi$-biflatness of some Banach
algebras} (preprint).
\bibitem{sah rom} A. Sahami, A. Pourabbas; {\it On character biprojectivity of Banach algebras}, Scientific Bulletin, In press.
\bibitem{sah col} A. Sahami and A. Pourabbas; {\it Approximate biprojectivity and $\phi$-biflatness of some Banach algebras}, Colloq. Math, In press.
\bibitem{sah1} A. Sahami and A. Pourabbas; {\it On $\phi$-biflat and $\phi$-biprojective  Banach
algebras}, Bull. Belg. Math. Soc. Simon Stevin, {\bf 20}(2013)
789-801.
\bibitem{sah3} A. Sahami and A. Pourabbas; {\it  Approximate biprojectivity of certain semigroup algebras}, Semigroup Forum, {\bf 92}(2016) 474-485.
\bibitem{san} M. Sangani Monfared; {\it Character amenability of Banach
algebras}, Math. Proc. Camb. Phil. Soc. {\bf 144} (2008) 697-706.
\bibitem{zhang} Y. Zhang; {\it Nilpotent ideals in a class of Banach algebras}, Proc. Amer. Math. Soc.
{\bf 127}  (11) (1999), 3237-3242.
\end{thebibliography}
\end{document}